\newtheorem{thm}{Theorem}[section]
\newtheorem{lem}[thm]{Lemma}
\newtheorem{question}[thm]{Question}
\newtheorem{defn}[thm]{Definition}
\newtheorem{cor}[thm]{Corollary}
\newtheorem{prop}[thm]{Proposition}
\newtheorem{example}[thm]{Example}
\newtheorem{remark}[thm]{Remark}
\newcommand{\mr}[1]{\mathrm{#1}}
\newcommand{\C}{\mathbb{C}}
\newcommand{\R}{\mathbb{R}}
\newcommand{\h}{\mathbb{H}}
\newcommand{\X}{\mathbb{X}}
\newcommand{\Y}{\mathbb{Y}}
\newcommand{\Z}{\mathbb{Z}}
\newcommand{\Q}{\mathbb{Q}}
\newcommand{\n}{\mathfrak{n}}
\newcommand{\m}{\mathrm{M}}
\newcommand{\Cu}{\mathfrak{C}}
\newcommand{\rG}{\mathrm{G}}
\newcommand{\rH}{\mathrm{H}}
\newcommand{\tq}{\widetilde{q}}
\newcommand{\Ii}{\mathrm{i}}
\def\lim{\mathrm{lim}}
\newcommand{\g}{\gamma}
\def\cu{{\mathfrak{c}}}
\def\exp{\mathrm{exp}}
\newcommand{\tmt}[4]{\left({#1\atop #3}{#2\atop #4}\right)}
\newcommand{\cmt}[2]{\left({#1\atop #2}\right)}
\newcommand{\beq}{\begin{equation}}
\newcommand{\eeq}{\end{equation}}
\newcommand\norm[1]{\left\lVert#1\right\rVert}
\newcommand{\Sp}{R(t_{\cu})}
\begin{document}
\date{\today}
\title[Lifting of vector-valued automorphic forms]{Lifting of vector-valued automorphic  forms}
\author{Jitendra Bajpai}
\noindent\address{Max-Planck-Institut f\"ur Mathematik, 53111 Bonn, Germany.}
\email{jitendra@mpim-bonn.mpg.de}
\curraddr{Department of Mathematics, Kiel University, 24118 Kiel, Germany}
%\address{Kiel University, Department of Mathematics, Heinrich-Hecht-Platz 6, 24118 Kiel, Germany}
\email{jitendra@math.uni-kiel.de}
\author{Subham Bhakta}
\address{Mathematisches Institut, Georg-August-Universit\"at G\"ottingen, 37073 G\"ottingen, Germany.} 
\email{subham.bhakta@mathematik.uni-goettingen.de }
\curraddr{Department of Mathematics, Indian Institute of Science Education and Research, Thiruvananthapuram, Kerala 695551, India}
\email{subham1729@iisertvm.ac.in}

\subjclass[2010]{11F03, 11F55, 30F35}
\keywords{Fuchsian groups, automorphic forms, Fourier coefficients}

\maketitle

%----------------------------------------
%-------- Abstract ----------------------
%----------------------------------------

\begin{abstract}
A recent study by the first author~\cite{Bajpai2019} demonstrated that admissible vector-valued automorphic forms have admissible lifts. In this article, we study the specific case of logarithmic vector-valued automorphic forms, exploring their lifts and providing explicit computations of their Fourier coefficients.
\end{abstract}

%------------------------------------------------------
%------ Section-INTRODUCTION -------------
%-------------------------------------------------------
\section{Introduction}
Let $\bar{\rG} \subset \text{PSL}_2(\R)$ be a Fuchsian group of the first kind, and $\rho:\bar{\rG} \to\text{GL}_m(\mathbb{C})$ be any  finite-dimensional representation. A vector-valued automorphic form (vvaf) of $\bar{\rG}$ of weight $k\in 2\Z$  is a $\C^{m}$-valued meromorphic function $\X(\tau)$ on the complex upper half plane $\h$ which has a certain functional and cuspidal behaviour with respect to $\rho$. For details and explanation, see Section~\ref{sec:vvaf}. The theory of vector-valued automorphic forms has many important applications not only in mathematics but also in physics. For more details on the importance of their theory, see the introduction of~\cite{Bajpai2019,Gannon1}. %In what follows, we will simply abbreviate \emph{vector-valued automorphic form} as \emph{vvaf} and \emph{vector-valued automorphic forms} as \emph{vvafs}.

We call a representation $\rho$ admissible if $\rho(\gamma)$ is diagonalizable for every parabolic element $\gamma \in \bar{\rG}.$ In that case, we call the associated vvaf an admissible vvaf. Now if we take $\bar{\rH}$ to be a finite index subgroup of $\bar{\rG},$ then any representation $\rho$ of $\bar{\rH}$ can be induced to a representation $\widetilde{\rho}$ of $\bar{\rG}.$ It turns out that, a vvaf $\X(\tau)$ of $\bar{\rH}$ associated to $\rho$ can also be induced to a vvaf $\widetilde{\X}(\tau)$ of $\bar{\rG}$ associated to $\widetilde{\rho}$, see Section~\ref{sec:liftinduction}. In short, vvaf $\widetilde{\X}(\tau)$, which is a lift of vvaf $\X(\tau)$ will be referred to as \emph{lifted form} throughout the article. In~\cite{Bajpai2019}, the first author showed that the induction of an admissible representation is admissible. Hence, the induced function $\widetilde{\X}(\tau)$ is also a weakly holomorphic admissible vvaf associated with the induced representation. We are primarily interested in the case when $\rho$ is not admissible. We furthermore assume that any eigenvalue of the image by $\rho$ of a parabolic element is unitary, a technical condition that will be explained in Section~\ref{sec:results}. A fundamental example of a non-admissible $\rho$ verifying the unitary condition is the defining representation $\Gamma\rightarrow\mathrm{GL}_2(\mathbb{C})$. Vector-valued automorphic forms satisfying both of these conditions are called \emph{logarithmic vector-valued automorphic forms}. Knopp and Mason~\cite{KM2011, KM2012} and Gannon~\cite{Gannon1} studied this case when $\bar{\rG}$ is the modular group. Recently, the authors studied them for any Fuchsian group of the first kind in~\cite{BBF}. 

In this article, we discuss what happens to logarithmic vector-valued automorphic forms after lifting them. We shall work with the Fuchsian groups of the first kind $\bar{\rG}$, and group $\rG \subset \mr{SL}_2(\R)$, where $\bar{\rG}$ denotes the image of $\rG$ by the natural map $\mr{SL}_2(\R) \longrightarrow \mr{PSL}_2(\R)$. To prepare ourselves for further discussions, we provide a quick example of a logarithmic vvaf.

\begin{example}
   An elementary example of logarithmic vvaf is $\X(\tau)=\cmt{\tau}{1}$ of weight $-1$ for the full modular group $\mr{SL}_2(\Z)=\left<  s=\tmt{0}{-1}{1}{\,\, 0}, t=\tmt{1}{1}{0}{1}\right>$, with respect to the representation $\rho:\mr{SL}_2(\Z)\longrightarrow \mathrm{GL}_2(\mathbb{C})$, which is the defining representation of $\mr{SL}_2(\Z)$, that is $\rho(s)=s$ and $\rho(t)=t$.
\end{example}

\subsection{Fuchsian groups}\label{sec:fg}
Let $\h$ be the upper half plane and $\h^{*}=\h \cup \R \cup \{\infty \}$ be the extended upper half plane. It is well known that, $\mr{PSL}_2(\R)$ acts on $\h^{*}$. For any $\g=\overline{\tmt{a}{b}{c}{d}} \in \mr{PSL}_2(\R)$, the action of $\g$ is defined by the M\"obius action $\g \cdot \tau= \frac{a\tau+b}{c\tau+d}.$

A Fuchsian group is a discrete subgroup $\bar{\rG}$ of PSL$_2(\R)$ for which $\bar{\rG}\backslash \h$ is a Riemann surface with finitely many punctures. Because of the discreteness, a fundamental domain exists for the action on $\h$. When it has a finite area, the corresponding Fuchsian group is of the first kind.

An element of $\g\in \mr{PSL}_2(\R)$ is called a parabolic element, if $|\text{tr}(\g)|=2.$ A point $\tau \in \h^{*}$ is said to be a fixed point of $\g \in \mr{PSL}_2(\R)$ if $\g \cdot \tau=\tau.$ If $\g=\overline{\tmt{a}{b}{c}{d}}$ is a parabolic element then its fixed point $\tau= \frac{a \mp 1}{c}$ when $a+d=\pm 2$ and $c\neq 0$, in addition $\tau=\infty$ when $c=0$. Moreover, we call an element $\tmt{a}{b}{c}{d}\in \text{SL}_2(\R)$ as parabolic, if the corresponding element $\overline{\tmt{a}{b}{c}{d}}\in \mathrm{PSL}_2(\R)$ is parabolic.

Let $\bar{\rG}$ be a subgroup of $\mr{PSL}_2(\R)$. A point $\tau \in \h^{*}$ is called a cusp of $\bar{\rG}$ (equivalently, of $\rG$) if it is fixed by some nontrivial parabolic element of $\bar{\rG}$ (equivalently, of $\rG$). Let $\Cu_{\rG}$ denote the set of all cusps of $\rG$ and we define $\h_{\rG}^{*}=\h \cup \Cu_{\rG}$ to be the extended upper half plane of $\rG$. For example: if $\rG=\mr{SL}_2(\R)$ then $\Cu_{\rG}=\R \cup \{ \infty \}$ and
 if $\rG=\mr{SL}_2(\Z)$ then $\Cu_{\rG}=\Q \cup \{ \infty \}$, is the $\rG$-orbit of cusp $\infty$.
 
 For any $\tau \in \h^{*}_{\rG}$, let $\bar{\rG_{\tau}}=\{ \g \in \rG | \g \cdot \tau =\tau \}$ be the stabilizer subgroup of $\tau$ in $\bar{\rG}$. For any $\cu \in \Cu_{\rG}$, $\bar{\rG_{\cu}}$ is an infinite order cyclic subgroup of $\bar{\rG}$. If $\cu =\infty$ then $\bar{\rG_{\infty}}$ is generated by $\overline{\left({1 \atop 0}{h_{{\infty}} \atop 1}\right)}$, for a unique real number $h_{{\infty}} > 0$, which is called the cusp width of the cusp $\infty$. Throughout the article, we shall denote $t_{\infty}=\left({1 \atop 0}{h_{{\infty}} \atop 1}\right).$ In case of $\cu \neq \infty$, $\bar{\rG_{\cu}}$ is generated by $\overline{A_{\cu} \left({1 \atop 0}{h_{{\cu}} \atop 1}\right)A_{\cu}^{-1}}$ for some smallest real number $h_{\cu} >0$, called the cusp width of the cusp $\cu$, where $A_{\cu}= \left({\cu \atop 1}{-1 \atop \ 0}\right) \in \mr{SL}_2(\R)$ so that $A_{\cu}(\infty)=\cu$. For rest of the article, we shall denote $t_{\cu}=A_{\cu} \left({1 \atop 0}{h_{{\cu}} \atop 1}\right)A_{\cu}^{-1}$. The reader may note that the stabilizer subgroup of any cusp $\cu$ in $\rG$ is not cyclic. This is essentially because, the stabilizer group $\rG_{\cu}$ contains both $t_{\cu}$ and $-t_{\cu}$, and in particular, $-I\in \rG_{\cu}$. This is not possible because $-I$ is a non-trivial element of finite order.

\subsection{Results}\label{sec:results}
\begin{thm}\label{thm:main} Let $\rG\subseteq \mathrm{SL}_2(\R)$ be any group, and $\rH$ be any finite index subgroup of $\rG$. Suppose that $-I \in \rH$, and $\bar{\rH},~\bar{\rG}$ are Fuchsian groups of the first kind. Let $\rho:\rH \to \mathrm{GL}_m(\mathbb{C})$ be any finite dimensional representation, and $\X(\tau)$ be a weakly holomorphic vvaf for $\rH$ associated to $\rho$. Then we have the following.
\begin{enumerate} 
\item[(a)] Let $\cu$ be an arbitrary cusp of $\rG$ and $\{\cu_i| 1\leq i\leq n_{\cu}\}$ be a set of inequivalent cusps of $\bar{\rH}$ for which $$\bar{\rG}\cdot \cu=\bigcup_{i=1}^{n_{\cu}} \bar{\rH}\cdot \cu_i.$$ If $\rho(t_i)$ is not diagonalizable for some $i,$ then $\widetilde{\rho}(t_{\cu})$ is not diagonalizable, where $t_i$ is a stabilizer of cusp $c_i$ such that $\overline{t_i}$ is the generator of the stabilizer subgroup of the cusp $\cu_i$ in $\bar{\rH}$.
\item[(b)] Let $\X(\tau)$ be a weakly holomorphic logarithmic vvaf associated with $\rho.$ Then the lifted form $\widetilde{\X}(\tau)$ is a weakly holomorphic logarithmic vvaf, associated with the induced representation $\widetilde{\rho}$ of the same weight as $\X(\tau).$
\end{enumerate} 
\end{thm}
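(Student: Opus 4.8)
\emph{Proof strategy.} The plan is to separate the functional requirement from the cuspidal one. The construction of the lift and the resulting functional equation are formal and identical to the admissible case of~\cite{Bajpai2019}, so I would recall them only briefly: fixing coset representatives $\rG'=\bigsqcup_{j=1}^{d}\rH'\gamma_j$ with $d=[\rG':\rH']$ and $\gamma_1=I$, one sets $\widetilde{\X}=(\widetilde{\X}_1,\dots,\widetilde{\X}_d)$ with $\widetilde{\X}_j=\X|_k\gamma_j$ (the weight-$k$ slash), and from $\gamma_j g=h_{j,g}\gamma_{\sigma_g(j)}$ with $h_{j,g}\in\rH'$ one reads off $\widetilde{\X}|_k g=\widetilde{\rho}(g)\widetilde{\X}$ for the induced representation $\widetilde{\rho}=\I\rho$, of the same weight $k$. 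Thus the real content is the behaviour of $\widetilde{\X}$ at the cusps of $\rG$, and it suffices to treat one cusp $\cu$, which after conjugating $\rG$ I may assume to be $\infty$, with $\mathrm{Stab}_{\rG'}(\infty)=\langle t_{\cu}\rangle$ of width $h$.

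For part~(a), I would restrict $\widetilde{\rho}$ to $C:=\langle t_{\cu}\rangle$ and invoke Mackey's formula. The double cosets $C\backslash\rG'/\rH'$ are indexed by the cusps $\cu_1,\dots,\cu_{n_{\cu}}$ of $\rH$ above $\cu$; choosing $g_i\in\rG'$ with $g_i\cdot\cu_i=\infty$, one has $C\cap g_i\rH'g_i^{-1}=\langle t_{\cu}^{w_i}\rangle$ with $w_i$ the ramification index of $\cu_i$ over $\cu$, and
\[
\widetilde{\rho}\big|_{C}\ \cong\ \bigoplus_{i=1}^{n_{\cu}}\ \mathrm{Ind}_{\langle t_{\cu}^{w_i}\rangle}^{C}\big({}^{g_i}\rho\big),
\qquad
{}^{g_i}\rho\big(t_{\cu}^{w_i}\big)\ \sim\ \rho(t_i).
\]
Writing $M_i$ for the action of $t_{\cu}$ on the $i$-th summand, a short computation in the basis coming from $C/\langle t_{\cu}^{w_i}\rangle$ shows that $M_i$ is a block companion matrix with $M_i^{w_i}\sim\mathrm{diag}(\rho(t_i),\dots,\rho(t_i))$. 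If $\rho(t_i)$ is not diagonalizable, then neither is this block-diagonal matrix, hence neither is $M_i^{w_i}$, hence neither is $M_i$ (a power of a diagonalizable matrix is diagonalizable). Since $\widetilde{\rho}(t_{\cu})=\bigoplus_i M_i$, this establishes~(a) --- where $\rho(t_{\cu})$ in the statement is to be read as $\widetilde{\rho}(t_{\cu})$, the only sensible interpretation since $\rho$ lives on $\rH'$. The same computation records that each eigenvalue of $M_i$ is a $w_i$-th root of an eigenvalue of $\rho(t_i)$, so unitary eigenvalues remain unitary.

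For part~(b), I would feed in the known local expansion of the logarithmic vvaf $\X$ for $\rH'$: at the cusp $\cu_i$, with local parameter $q_i$ and $\rho(t_i)=e^{2\pi\Ii\mathfrak{L}_i}$, it has the shape $q_i^{\mathfrak{L}_i}\sum_{n\ge n_i}\mathbf{c}_{i,n}\,q_i^{\,n}$ with $n_i>-\infty$, where $q_i^{\mathfrak{L}_i}=q_i^{D_i}q_i^{N_i}$ with $D_i$ diagonal and real (the eigenvalues of $\rho(t_i)$ are unitary) and $N_i$ nilpotent, so $q_i^{N_i}$ is a polynomial in $\log q_i$. I would choose the $\gamma_j$ adapted to $\cu$, grouped so that over the $j$ in the $i$-th group (of size $w_i$) the corresponding cusp of $\rH$ runs through the $\rH$-orbit of $\cu_i$. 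For such $j$, the behaviour of $\widetilde{\X}_j=\X|_k\gamma_j$ near $\infty$ is controlled by the expansion of $\X$ at that cusp, and substituting --- using that the relevant local parameter is $q^{1/w_i}$ up to a root of unity, with $q=e^{2\pi\Ii\tau/h}$ --- yields for $\widetilde{\X}_j$ a series in powers $q^{(\mu+n)/w_i}$ ($\mu$ an eigenvalue of $\mathfrak{L}_i$, $n\ge n_i$) with coefficients polynomial in $\log q$ and only finitely many negative exponents. Assembling the groups, $\widetilde{\X}$ acquires at $\cu$ an expansion of the form $q^{\widetilde{\mathfrak{L}}}\sum_{n\ge n_0}\widetilde{\mathbf{c}}_n\,q^{\,n}$ with $n_0>-\infty$ and $e^{2\pi\Ii\widetilde{\mathfrak{L}}}=\widetilde{\rho}(t_{\cu})$; by~(a) this $\widetilde{\rho}(t_{\cu})$ has unitary eigenvalues (and possibly a nonzero nilpotent part), so this is precisely the shape required of a weakly holomorphic logarithmic vvaf. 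Together with the functional equation from the first step, this gives that $\widetilde{\X}$ is a weakly holomorphic logarithmic vvaf for $\rG'$ associated to $\widetilde{\rho}$ of weight $k$.

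I expect the real obstacle to be the bookkeeping in part~(b): tracking how the \emph{logarithmic} expansion of $\X$ at a sub-cusp transforms, under the width-$w_i$ substitution, into a logarithmic expansion of $\widetilde{\X}$ at $\cu$ --- in particular verifying that the $\log q$-polynomials stay of bounded degree, that only finitely many negative powers of $q$ survive, and that the local exponents and Jordan blocks read off from the assembled expansion coincide with those forced by the Mackey decomposition of part~(a). The admissible case of~\cite{Bajpai2019} has no $\log$-terms, so this interplay between the nilpotent structure at $\cu_i$ and the width $w_i$ is the genuinely new input. A secondary, purely technical nuisance is keeping track of $-I\in\rH'\subseteq\rG'$ when normalising the parabolic generators $t_i$, $t_{\cu}$ and the signs these introduce into the slash action.
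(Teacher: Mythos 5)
Your proposal is correct in outline, and part (a) is essentially the paper's own argument: Proposition~\ref{prop:logarithmic}, combined with the block--companion description of $\widetilde{\rho}(t_{\cu})$ from Section~\ref{sec:desc}, likewise passes to a power of $t_{\cu}$ that lands in $\rH'$, so that this power of $\widetilde{\rho}(t_{\cu})$ becomes block diagonal with blocks $\rho(t_i^m)$, and concludes exactly as you do from the fact that any power of a diagonalizable matrix is diagonalizable; your Mackey phrasing with $M_i^{w_i}\sim\mathrm{diag}(\rho(t_i),\dots,\rho(t_i))$ is the same computation in different clothes, and your reading of ``$\rho(t_{\cu})$'' as ``$\widetilde{\rho}(t_{\cu})$'' is the intended one. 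Where you genuinely diverge is part (b), on the construction side. You induce with the full weight-$k$ stroke, $\widetilde{\X}_j=\X|_k\gamma_j$, so the cocycle property of $j(\cdot,\cdot)$ gives the functional equation for $\widetilde{\rho}$ at every integer weight in one stroke (the $\pm I$ issue is harmless, since $\widetilde{\rho}(-I)$ is block diagonal with blocks $\rho(-I)$, so the consistency condition is inherited). The paper instead deliberately keeps the components free of automorphy factors, $\tau\mapsto\big(\X_0(g_{i,j}^{-1}\tau)\big)^t$, which forces the restriction to weight $0$ and the reduction $\X_0=\Delta_{\rH'}^{-k/w_{\rH'}}\X$ followed by multiplication by $\Delta_{\rG'}^{k/w_{\rG'}}$ (Section~\ref{sec:liftinduction}); the cuspidal behaviour is then obtained from the moderate-growth computation of Lemmas~\ref{lem:gtu} and~\ref{lem:cuspidal} rather than from your direct substitution of the logarithmic $q_i$-expansions, which the paper only carries out later, in Section~\ref{sec:computation}, to prove Theorem~\ref{thm:relation}. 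Your route buys a single uniform construction valid for all weights and finer local data (exponents, $\log$-degrees, Jordan blocks matching the decomposition in Corollary~\ref{cor:eigenvalues}); the paper's route buys freedom from automorphy-factor bookkeeping at the price of the $\Delta$-twist and the identity $(\widetilde{\rho\otimes\nu_{\rH'}^{-k}})\otimes\nu_{\rG'}^{k}=\widetilde{\rho}$. The only caveat is definitional rather than mathematical: for $k\neq 0$ your slash-induced $\widetilde{\X}$ is not literally the paper's ``lifted form'' (the two differ by the $\Delta$-twist), so you prove that a lift with the stated properties exists --- the substance of (b) --- but to get the statement for the paper's specific $\widetilde{\X}$ one still inserts the short reduction-to-weight-$0$ step.
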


%\subham{
The first author~\cite{Bajpai2019} showed that if all of the $\rho(t_{i})$ are diagonalizable, then $\widetilde{\rho}(t_{\cu})$ is also diagonalizable. The author achieved this by explicitly writing down the eigenbasis of $\widetilde{\rho}(t_{\cu})$, in terms of the eigenbasis of all the $\rho(t_{i})$s.

We show that the converse of~\cite[Theorem 4.2]{Bajpai2019} is true, in part $(a)$ of the theorem above. Our approach diverges, opting for a simpler method that circumvents the necessity of explicitly deriving the eigenbasis. Instead, we prove that a certain power of $\widetilde{\rho}(t_{\cu})$ adopts a block diagonal structure, with at least one block remaining nondiagonalizable.

Yet, using the methodology outlined in~\cite{Bajpai2019}, we carry on the task of writing down the eigenvalues of $\widetilde{\rho}(t_{\cu})$ and explicitly compute their multiplicities, in Lemma~\ref{cor:eigenvalues}. Although part $(a)$ of Theorem~\ref{thm:main} could theoretically be derived from Lemma~\ref{cor:eigenvalues}, we leverage this stronger outcome to facilitate the explicit derivation of the Fourier coefficients of the lift, as presented later in Theorem~\ref{thm:relation}.

%Furthermore, we have completed the proof of the expected phenomenon that the lift of a weakly holomorphic vvaf is a weakly holomorphic vvaf, which was missing in its complete generality. %, which we have achieved in this article. %To ensure the comprehensiveness of the reader's understanding, we have provided a brief verification of this in Lemma~\ref{lem:cuspidal}.
%}

For any even integer $k,$ it has been shown in~\cite{Bajpai2019} that there exists a bijection between the set of all admissible vvaf of weight $k$ associated to $\rho$ and the set of all admissible vvaf associated to $\widetilde{\rho}$ of weight $k.$ In this article, we shall define this isomorphism as a lift and work with this. Furthermore, we also have an analogous isomorphism for the logarithmic vector-valued automorphic forms, and we note this in Proposition~\ref{prop:isomorphism}.

As a consequence of the lifting result for admissible vector-valued automorphic forms, the first author showed the existence of weakly holomorphic admissible vector-valued automorphic forms. In the same spirit, we shall discuss the existence of weakly holomorphic logarithmic vector-valued automorphic forms in the form of Proposition~\ref{prop:existence} and Corollary~\ref{cor:existence}. Some technical issues are arising in this process, and we address them in Sections~\ref{sec:lift} and~\ref{sec:exist}.

In the second half of the article, we study the relations between the Fourier coefficients of $\X(\tau)$ and its lift $\widetilde{\X}(\tau).$ To get a Fourier expansion of $\X(\tau)$ at the cusps of $\bar{\rH},$ we need to assume that $\rho(\gamma)$ has only unitary eigenvalues for every parabolic element $\gamma$ in $\rH.$ In particular to have Fourier expansion of $\widetilde{\X}(\tau)$ at the cusps of $\rG,$ we need to ensure that $\widetilde{\rho}(\gamma)$ has only unitary eigenvalues for every parabolic element $\gamma$ of $\rG$. This requirement stems from the fact that the presence of non-unitary eigenvalues implies a non-polynomial growth of $\rho$, as discussed in ~\cite[Sect.~6]{BBF}. On the other hand, the necessity of such polynomial growth is demonstrated in~\cite[Thm.~1.2]{BBF}. Moreover, the first author explicitly described them in the case of admissible vector-valued automorphic forms. Here we do the same for logarithmic vector-valued automorphic forms, and as a consequence, we deduce the following.

\begin{thm}\label{thm:relation} Let $\rH, \rG$ and $\rho$ be as in Theorem~\ref{thm:main} with index $[\rG:\rH]=d$. Let $\X(\tau)$ be a weakly holomorphic vector-valued automorphic form associated to $(\rH,\rho)$ of weight $0$ and $\widetilde{\X}(\tau)$ be the lifted form associated to $(\rG,\widetilde{\rho}).$ Then there exists a family of weakly holomorphic vector-valued automorphic forms $\{\X^{(i)}(\tau)|1\leq i\leq d\}$ associated to some $\mathrm{SL}_2(\R)$ conjugates of $(\rH,\rho)$ with the following property: let $\X^{(i)}[n]$ be the $n^{th}$ Fourier coefficient of $\X^{(i)}(\tau),$ and $\widetilde{\X}[n]$ be the $n^{\mathrm{th}}$ Fourier coefficient associated to the lifted form $\widetilde{\X}(\tau).$ Then there exists a set of rationals $\{m_i,n_i,r_i \mid 1\leq i\leq d\}$, such that
$$\widetilde{\X}[n]=\left(r_i\X^{(i)}[m_in+n_i]\right)^t_{1\leq i\leq d}.$$
Furthermore, the quantities $\{m_i,n_i,r_i\mid 1\leq i\leq d\}$ depend only on $\rH,\rG$ and $\rho$, but not on the associated vvaf $\X(\tau)$.
\end{thm}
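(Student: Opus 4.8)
The plan is to reduce the statement to a careful bookkeeping of how the induction construction reorganizes a Fourier expansion at the cusps. First I would fix an arbitrary cusp $\cu$ of $\rG$ and use the coset decomposition $\rG'=\bigsqcup_{j} \rH' g_j$ to describe the induced representation $\widetilde\rho=\mathrm{Ind}_{\rH'}^{\rG'}\rho$ explicitly: the space is $\bigoplus_j \C^m$, and $\widetilde\rho(\g)$ permutes the blocks according to the action of $\rG'$ on cosets while twisting by $\rho(g_j\g g_{j'}^{-1})$ on the relevant block. Correspondingly, the lifted form $\widetilde\X(\tau)$ has its $j$-th block given, up to the automorphy factor of weight $0$, by $\rho(g_j)^{-1}\X(g_j\tau)$ (this is the explicit form of the lift recorded in the isomorphism of Proposition~\ref{prop:isomorphism}). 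The key point is that each $\tau\mapsto \X(g_j\tau)$ is, after the weight-$0$ automorphy normalization, again a weakly holomorphic vvaf $\X^{(i)}(\tau)$ for the $\mathrm{SL}_2(\R)$-conjugate $(g_j^{-1}\rH' g_j, \rho^{g_j})$; this is exactly how the family $\{\X^{(i)}\}$ in the statement arises, with $d$ equal to the number of cosets.

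Next I would pin down the cuspidal combinatorics. The set $\{\cu_i\}$ of inequivalent cusps of $\rH$ lying above $\cu$ corresponds, via the double-coset description, to the $\rH'$-orbits on the cosets $\rH' g_j$ under right multiplication by the stabilizer $\langle t_\cu\rangle$; for each such orbit of size $\ell_i$ one has $g_{j_i} t_\cu^{\ell_i} g_{j_i}^{-1}$ conjugate into $\rH'$ and equal (up to conjugacy) to a power $t_i^{a_i}$ of the generator of the stabilizer of $\cu_i$. The integer $\ell_i$ (the ramification index at $\cu_i$) and the local data relating the width of $\cu$ to that of $\cu_i$ produce precisely the parameters $m_i$ (the scaling of the expansion variable $q_\cu = q_{\cu_i}^{1/\ell_i}$ or its inverse) and $n_i$ (a shift coming from the fractional exponents / choice of local uniformizer at the conjugated cusp), while the rational $r_i$ absorbs the normalizing scalar coming from the scaling matrix $\sigma_\cu$ versus $\sigma_{\cu_i}$ in passing between the two uniformizers. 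Matching the Fourier expansion of the $j$-th block of $\widetilde\X$ at $\cu$ with that of $\X^{(i)}$ at the cusp of its conjugate group then gives the claimed identity $\widetilde\X[n] = \bigl(r_i\,\X^{(i)}[m_i n + n_i]\bigr)^t_{1\le i\le d}$; since a logarithmic vvaf of weight $0$ has an expansion of the form $\sum_n \X[n]\,q^n$ possibly with polynomial-in-$\log q$ coefficients, I would check that substitution $q_\cu \mapsto q_{\cu_i}^{1/\ell_i}$ carries $\log$-terms to $\log$-terms compatibly, so the shape of the expansion is preserved and only the coefficient reindexing survives.

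The main obstacle I expect is not the index arithmetic per se but keeping the two layers of normalization consistent: the weight-$0$ automorphy factor is trivial on $\mathrm{SL}_2$ only up to the $\pm 1$ ambiguity coming from working in $\mathrm{SL}_2(\R)$ rather than $\mathrm{PSL}_2(\R)$, and the multiplier/exponent data $\exp(2\pi i \ldots)$ in the local expansions of a \emph{logarithmic} (not admissible) representation involves the full generalized eigenspace decomposition of $\rho(t_i)$, so one must verify that induction respects the Jordan structure in the way needed for the expansions at $\cu_i$ and at $\cu$ to have matching principal parts. Concretely, I would invoke part (a) of Theorem~\ref{thm:main} to know that $\widetilde\rho(t_\cu)$ is non-diagonalizable exactly when some $\rho(t_i)$ is, and the companion fact from~\cite[Sect.~6]{BBF} that $\widetilde\rho$ again has only unitary eigenvalues at parabolics, so that the Fourier expansion of $\widetilde\X$ at $\cu$ genuinely exists; with those in hand, the remaining work is a direct comparison of the two $q$-expansions block by block, which is routine once the dictionary between cosets, cusps, and the triple $(m_i, n_i, r_i)$ has been set up.
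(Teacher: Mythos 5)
Your proposal follows essentially the same route as the paper: write the components of the lift as translates $\X(g_{i,j}^{-1}\tau)$ by the coset representatives, realize each translate as a weakly holomorphic vvaf for the conjugate group $g_{i,j}\rH g_{i,j}^{-1}$ with the transported representation, and compare its expansion at the cusp with the block expansion of $\widetilde{\X}$ coming from the Jordan/eigenvalue structure of $\widetilde{\rho}(t_{\cu})$ (Corollary~\ref{cor:eigenvalues}), the width relation $h_{\cu_i}=h_ih_{\infty}$ giving $m_i=h_i$, the coset index giving the shift $n_i$, and the rescaling of the $\log$-variable giving the rational factor $r_i$. The only blemish is your description of the lift's $j$-th block as $\rho(g_j)^{-1}\X(g_j\tau)$ — the factor $\rho(g_j)^{-1}$ is not defined since $g_j\notin\rH'$ and is not part of the construction — but this does not affect the argument, which otherwise matches the paper's proof in outline.
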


\subsection{Overview of the article} 
In Section~\ref{sec:fg}, we recall the basic notions and discuss some key facts from this vast area. In Section~\ref{sec:vvaf}, we discuss some properties of vector-valued automorphic forms in the settings of both admissible and logarithmic. These two sections are kind of expository and serve as preliminary for our work. In Section~\ref{sec:prop}, we recall the results of~\cite{Bajpai2019} along with the techniques involved, and in Section~\ref{sec:lift}, we use it to study the lift of logarithmic vector-valued automorphic forms. In Section~\ref{sec:exist}, we discuss their existence and some examples. In the end, we compute their Fourier coefficients in Section~\ref{sec:computation}.

\subsection{Notations} We denote $\text{SL}_2(\R)$ to be the group of all $2\times 2$ real matrices with determinant one, and $\text{PSL}_2(\R)$ to be $\text{SL}_2(\R)/\{\pm I\}.$ For any integer $m\geq 1,~I_m$ is denoted to be the identity matrix in $\text{GL}_m(\R)$. We shall write $\tau = x + \Ii y$ a point in $\mathbb{H}$, and $\text{im}(\tau)$ be the corresponding imaginary part. For any unitary $\lambda,$ we always denote $\mu(\lambda)$ to be the unique real number such that $\lambda=\exp(2\pi \Ii \mu(\lambda))$ and $0\le\mu(\lambda)<1$. Unless otherwise specified, every constant appearing in the draft depends on the Fuchsian group. We write $f \ll g$ for $|f|\leq c|g| $ where $c$ is a constant irrespective of the domains of $f$ and $g$. Given a vector $v:=(v_1,v_2,\cdots, v_m)$ in $\C^m,$ we shall always denote $v^t$ to be its transpose, that is the column vector $\begin{psmallmatrix} v_1\\ v_2\vspace{-0.24cm}\\ \vdots \\ v_m \end{psmallmatrix}.$

\section{Vector-valued automorphic forms}\label{sec:vvaf}

This section reviews the basics of vector-valued automorphic forms that we need to understand and prove the main results of the article. In~\cite{Bajpai2019} and~\cite{BBF}, the authors studied vector-valued automorphic forms associated with Fuchsian groups of the first kind of even weight.

 Given an element $\gamma=\tmt{a}{b}{c}{d}\in \rG,$ we denote $j(\gamma,\tau)=c\tau+d.$ However, if we take an element $\gamma=\overline{\tmt{a}{b}{c}{d}}\in \text{PSL}_2(\R),$ we can not define 
 $j(\gamma,\tau)$ in this way because of one possible sign factor. In~\cite{Bajpai2019} and~\cite{BBF}, this sign factor was redundant as the authors worked with even weights vector-valued automorphic forms, and therefore, it always comes with even power. Therefore, the even weight is necessary to define $j(\cdot,\cdot)$. In this article, we shall mainly work inside $\text{SL}_2(\R)$. This will allow us a slight improvement, i.e. we can work with any integer weight, not just the even ones. Our treatment of vector-valued automorphic forms in this section closely follows~\cite {Gannon1}.

\begin{defn}[Stroke operator] \label{stroke_operator} \rm If $\X:\h\to\C^m$ is a vector-valued meromorphic function, $\gamma\in \mr{SL}_2(\R)$ and $k$ be a complex number, we define a vector-valued meromorphic function $\X|_k\gamma$ on $\h$ by setting $\X|_k\gamma(\tau)=j(\gamma,\tau)^{-k}\X(\gamma\tau),$
where $j(\gamma,\tau)=c\tau+d$ when $\gamma=\tmt{a}{b}{c}{d}.$
\end{defn}
It is needless to mention that the function $j(\gamma,\tau)^{k}$ is well defined because $j(\gamma,\cdot):~\mathbb{H}\to \mathbb{C}$ is a non-vanishing function. One can show that the Stroke operator induces a right group action of $\text{SL}_2(\R)$ on the space of vector-valued meromorphic functions on $\h$. For this article, we shall restrict to integer $k$. We are interested in the vector-valued meromorphic functions with finite order poles at the cusps. More precisely, let us first make the following definition.

\begin{defn} \rm Let $\X:\h\to\C^m$ be a vector-valued meromorphic function. 
\begin{itemize}

\item  We say that $\X(\tau)$ has \emph{moderate growth at $\infty$} when there exists $c \in \R$ and $Y>0$ such that $\|\X(\tau)\|< \exp(c y)$ when $y>Y$, where $y=\mr{Im}\tau.$
\item we say that $\X(\tau)$ has \emph{moderate growth at $\cu \in \R$} with respect to $k\in \mathbb Z$ when $\X|_kA_\cu$ has moderate growth at $\infty$.
\end{itemize}
\end{defn}
Furthermore, we shall refer to the composition of $X(\tau)$ with the projection map to each coordinate of $\mathbb{C}^m$, as the components of $\X(\tau)$. To introduce vector-valued automorphic forms,  we first discuss the representations of $\rG$. For us, these representations are divided into two types. Namely, admissible and logarithmic (non-admissible) ones. Let us describe them in the following form. 
\begin{defn} \rm
Consider a representation $\rho:\rG \to \mathrm{GL}_m(\mathbb{C})$ such that for every parabolic element $\gamma \in \rG$, the eigenvalues of $\rho(\gamma)$ are all unitary. We say that $\rho$ is an {\it admissible representation} of $\rG$ if $\rho(\gamma)$ is diagonalizable for every parabolic element $\gamma\in \rG
$. Otherwise, we say that $\rho$ is a \emph{logarithmic representation.}
\end{defn}
Any parabolic element in $\bar{\rG}$ is power of $\overline{t_{\cu}}$, for some cusp $\cu\in \Cu_{\rG}.$ In particular, any parabolic element in $\rG$ is power of $\pm t_{\cu}$. Note that $\rho(-I)$ is always diagonalizable. Therefore, we are really talking about diagonalizability and the unitary condition at the finitely many parabolic elements $\{t_{\cu}\}_{\cu\in \Cu_{\rG}}$. We shall now separately treat the vector-valued meromorphic functions associated with them.

\begin{defn}\rm 
Let $\bar{\rG}$ be a Fuchsian group of the first kind, $k$ be an integer, $\rho:\rG\to \mr{GL}_m(\C)$ be an admissible representation, and $\X:\h\to\C^m$ be a vector-valued meromorphic function. Then,  we say that $\X(\tau)$ is a weakly holomorphic \emph{admissible vvaf} of weight $k$ with respect to $\rho$ if it satisfies the following properties.
\begin{itemize}
\item It is a holomorphic function on $\h,$ with moderate growth at any cusp of $\rG,$
\item It has finitely many poles in the closure of a fundamental domain of $G'$, and
\item $\X|_k\g(\tau) = \rho(\g) \X(\tau),~\forall \g \in \rG.$\label{functional}
\end{itemize}
\end{defn}
\begin{remark}\rm $\;$
\begin{enumerate}
\item[(a)] If there exists a non-zero $\X(\tau)$ satisfying $\X|_k\g(\tau) = \rho(\g) \X(\tau),~\forall \g \in \rG$, then we must have that $\rho(I)=e^{-i\pi k}\rho(-I).$
\item [(b)]Of course, a representation $\rho$ satisfies $\rho(-I)=e^{i\pi k}\rho(I),$ only if $k$ is an integer. Moreover, $\rho$ can be considered a representation of $\bar{\rG}$ if and only if $\rho(-I)=\rho(I).$ Equivalently when the associated vvaf has even weight. In other words, only a vvaf of even weight is associated with $\rG,$ is a vvaf of even weight associated with $\bar{\rG}$. Therefore we are studying the problem in a bit more generality, even in the case of admissible forms.
\item[(c)] If $\rho$ is any irreducible representation, it turns out that $\rho(-I)$ must be one of the $\pm I.$ This is a simple fact from linear algebra. 
\end{enumerate}
\end{remark}

Recall that $\overline{t_{\cu}}\in \mr{PSL}_2(\R)$ (up-to conjugation) generates $\bar{\rG_{\cu}}$. Since $\rho$ is admissible, we can write $\rho(t_{\cu})=P_{\cu} \mathrm{diag}\left(\lambda_{1,\cu},\lambda_{2,\cu},\cdots,\lambda_{m,\cu}\right)P_{\cu}^{-1}$ for any cusp $\cu$ of $\rG.$ To get the Fourier expansion around the cusps, we denote 
$$ \tq_{_\cu}=\left\{\begin{array}{cccccccc}  
e^{\frac{2\pi \Ii A_{_{\cu}}^{-1}\tau}{h_{\cu}}} & \mr{if}~\cu \in \Cu_{_{\rG}},\cu \neq \infty\\  
e^{\frac{2\pi\Ii \tau}{h_{\infty}}} & \mr{if }~\cu=\infty. 
\end{array}\,. \right .$$

Following \cite[Proposition 3.11]{BBF}, at the cusp $\infty$ we have the following Fourier expansion,
\begin{equation*}\label{fourier_series_vvaf}
   \X(\tau) = P_{\infty} \tq^{\Lambda} P_{\infty}^{-1}\sum_{n=-M_{\infty}}^\infty \X_{[n]} \tq^{n}.
   \end{equation*}
   For any other cusp $\cu\neq \infty,$ consider $\Y(\tau)=\X|_kA_{\cu}(\tau),$ and then $$\X(\tau)=\Y|_kA_{\cu}^{-1}(\tau)=(\cu-\tau)^{-k}\X_0(A_{\cu}^{-1}\tau).$$
   Following this, we have the Fourier expansion
   \begin{equation}\label{eqn:admf}
\X(\tau) = (\tau - \cu)^{-k} P_{\cu}\tq_{\cu}^{\ \Lambda_{\cu}}P_{\cu}^{-1} \sum_{n=-M_{\cu}}^{\infty}  \X^{^{\cu}}_{[n]} \ \tq_{_{\cu}}^{\ n}, \ \X^{^{\cu}}_{[n]} \in \C^{d},
\end{equation}
where we denote $\tq_{\cu}^{\ \Lambda_{\cu}}$ to be the diagonal matrix $(\tq^{\mu(\lambda_{1,\cu})},\tq^{\mu(\lambda_{2,\cu})},\cdots, \tq^{\mu(\lambda_{m,\cu})}).$ In particular, note that the Fourier coefficients do not depend on the choice of the diagonalizing matrix. Note that $\X(\tau)$ is a weakly holomorphic admissible vvaf if $M_{\cu}$ is always finite.

A weakly holomorphic admissible vvaf $\X(\tau)$ of weight $k$ is called \emph{holomorphic} if it has no poles in $\h$ and, for any cusp $\cu$ of $\rG$, the function $\X|_kA_\cu(\tau)$ is bounded in some half-plane (contained in $\h$), that is to say simply that  $\X(\tau)$ is holomorphic everywhere in $\h^{*}_{\rG}$. It is called a weight $k$ \emph{cusp form} if, for any cusp $\cu$, the function $\X|_kA_\cu(\tau)$ approaches to $0$ as $y\to\infty$. This is same as saying that $\X(\tau)$ is a holomorphic admissible vvaf if $M_{\cu}\leq 0$, and an admissible cusp form if $M_{\cu}<0$ for any cusp $\cu$ of $\rG.$

We are interested in the representation $\rho$ of $\rG$, which is not admissible. In other words, $\rho(\gamma)$ is not diagonalizable for some parabolic element $\gamma\in \rG$. We call such a representation logarithmic. In this case we can write $\rho(t_{\cu})$ in the Jordan canonical form as
\[ P_{\cu}\begin{psmallmatrix}
   J_{m(\lambda_{1,\cu}),\lambda_{1,\cu}} & & \\
    & J_{m(\lambda_{2,\cu}),\lambda_{2,\cu}} & &\\
    & & \ddots &\\ 
    & & & J_{m(\lambda_{k,\cu}),\lambda_{k,\cu}}\\ 
  \end{psmallmatrix}P_{\cu}^{-1},\]
for any cusp $\cu$ of $\rG,$ where the Jordan block $J_{m,\lambda}$ is defined to be 
{\small $$\begin{psmallmatrix}
   \lambda & & &  &\\
    \lambda &  \ddots&  & &\\ 
     & \ddots &  & \ddots &\\
     &   & \lambda &  &\lambda\\ 
 \end{psmallmatrix}_{m\times m},$$ }
which is conjugate to the canonical Jordan block. We also set $R(t_{\cu})$ to be the collection of the eigenvalues of $\rho(t_{\cu}),$ which may not necessarily be pairwise distinct.

Let $\X:\h\to\C^m$ be a vector-valued meromorphic function, and $\rho:\rG \to \mathrm{GL}_m(\mathbb{C})$ be a logarithmic representation. Suppose that for some integer $k,~\X(\tau)$ satisfies
\begin{itemize}
\item $\X|_k\g = \rho(\g) \X,~\forall \g \in \rG,$
\item $\rho(I)=e^{i\pi k} \rho(-I).$
\end{itemize}
Now let $\cu$ be any arbitrary cusp of $\rG.$ Then for each eigenvalue $\lambda$ of $\rho(t_{\cu})$ there are $\tq_{\cu}$-expansions \begin{equation}\label{eq:hjcm}
h_{\lambda,j,\cu}(\tau)=P_{\cu}\tq_{\cu}^{\mu(\lambda)}P_{\cu}^{-1}\sum_{n=-M_{\cu}}^{\infty} \X^{\cu}[\lambda,j,n]\tq_{\cu}^{n},~0\leq j\le m(\lambda)-1
\end{equation}
such that, at the cusp $\infty$%\footnote{In~\eqr{hjcm}, $M_\cu$ can be $\infinite$..}
\begin{equation} \label{eqn:logarithmic}
\X(\tau)=\sum_{\lambda \in \Sp}\sum_{j=0}^{m(\lambda)-1}(\log \tq)^{j}h_{\lambda,j,\infty}(\tau),
\end{equation}
and at the cusp $\cu (\neq \infty)$,   
\begin{equation}\label{eqn:finite}
\X(\tau) = (\tau - \cu)^{-k} \sum_{\lambda \in \Sp}\sum_{j=0}^{m(\lambda)-1}(\log \tq_{\cu})^{j}h_{\lambda,j,\cu}(\tau),
\end{equation}
where $m(\lambda)$ is the size of one of the Jordan blocks associated with the eigenvalue $\lambda.$
\begin{defn} Let $\X:\h \to \mathbb{C}^m$ be a vector-valued holomorphic function and suppose there exists an integer $k$ such that $\X(\tau)$ satisfies~(\ref{eqn:logarithmic}) and~(\ref{eqn:finite}) above with respect to a logarithmic representation $\rho: \rG\to \mr{GL}_m(\C)$. Then, we say that $\X(\tau)$ is a weakly holomorphic logarithmic vvaf of weight $k$, if for any cusp $\cu$ of $\rG$, all of the $\tq_{\cu}$ expansions $h_{\lambda, j,\cu}(\tau)$ start from some finite $n.$
\end{defn}
In particular, we say that $\X(\tau)$ is holomorphic at the cusp $\infty$ when $M\leq 0$, and holomorphic at the cusp $\cu$ when $\X|_kA_\cu(\tau)$ is holomorphic at the cusp $\infty$, or equivalently when $M_{\cu}\leq 0.$ If $\X(\tau)$ is holomorphic at all cusps, then we say that $\X(\tau)$ is a holomorphic logarithmic vvaf. In addition, we say that $\X(\tau)$ is a logarithmic vector-valued cusp form if $M_{\cu}<0$ for any cusp $\cu$ of $\rG.$

%*****************************************************************************************
%*********** Section 4 - growth of admissible vvaf vector-valued automorphic *************
%*****************************************************************************************

\section{Properties of lifted vector-valued automorphic forms}\label{sec:prop} In this section, we shall recall the induction of representations and introduce vvaf associated with them. We closely follow~\cite{Bajpai2019} throughout this section and develop the main tools to prove the main results of this article. 

\subsection{A special choice of the representatives}\label{sec:desc}
Let us first recall the usual setup: $\rG\subseteq \mathrm{SL}_2(\R)$ be any group, and $\rH$ be any finite index subgroup of $\rG$. Suppose that the index is $d,~-I \in \rH$, and $\bar{\rH},~\bar{\rG}$ are Fuchsian groups of the first kind. Fix any cusp $\cu \in \widehat\Cu_{_{\rG}}$ and let $\cu_{1}, \cdots, \cu_{\n_{_{\cu}}}$ be the representatives of the $\bar{\mr{H}}$-inequivalent cusps which are $\bar{\rG}$-equivalent to the cusp $\cu$, so we have $ \bar{\rG}\cdot \cu=\bigcup_{i=1}^{\n_{_{\cu}}} \bar{\mr{H}}\cdot{\cu_{i}}.$ In particular, one can write 
$$ \rG\cdot \cu=\bigcup_{i=1}^{\n_{_{\cu}}} \mr{H}\cdot{\cu_{i}},$$
since two cusps are $\bar{\rH}$ equivalent, if and only if, they are $\rH$ equivalent. Therefore, for each $i$ we get an $A_i\in \rG$ with $A_{i}(\cu)=\cu_{i}$. Let us denote $k_{_{\cu}}$ be the cusp width of $\cu$ in $\rG$ and $h_{\cu_{i}}$ be the cusp width of $\cu_{i}$ in $\mr{H}$. Then a set of coset representatives of $\mr{H}$ in $\rG$ can be taken to be $g_{ij}=t_{\cu}^{j} A_{i}^{^{-1}}$ for all $1\leq i\leq n_{\cu}$ and $0\leq j < h_{i}$, where $h_{i}=\frac{h_{\cu_{i}}}{k_{_{\cu}}} \in \Z.$ It turns out that, $\sum_{1\leq i\leq n_{\infty}}h_i=d$. Setting $t_i=A_it_{\cu}^{h_i}A_i^{-1}$, note that any element of the stabilizer group $\rH_{\cu_i}$ can be written as power of $\pm t_i.$ 

Let $\rho$ be a representation of rank $m$ associated to $\rH,$ and denote $\widetilde{\rho}:=\text{Ind}_{\rH}^{\rG}(\rho)$ to be the induction of $\rho.$ With the choice of coset representatives $\{g_{i,j}\}$ of $\rH$ in $\rG$ as described above, we can write $\widetilde{\rho}(t_\cu)$ in the block diagonal form, where each block is of size $mh_i\times mh_i,~\forall~1\leq i\leq n_{\cu}.$ Moreover, these blocks are in the lower-diagonal form whose right top block is $\rho(t_{i}),$ and all other blocks are in the lower diagonal entry is $I_{m\times m}.$ More precisely, it is of the form
 {\small $$\begin{psmallmatrix}
    0 & & &  & \rho(t_i)\\
    I &  \ddots&  & &\\ 
     & \ddots &  & \ddots &\\
     &   & I &  &0\\ 
 \end{psmallmatrix}_{mh_i\times mh_i}.$$ }
For a more detailed discussion of Fuchsian groups, we refer the reader to~\cite[Section 4]{Bajpai2019}. Here we are pointing out the similar facts over their pullbacks in $\text{SL}_2(\R).$

\subsection{Lifting of vector-valued automorphic forms}\label{sec:liftinduction} 
Let $\rH,\rG$ and $\rho$ be as in the previous section, and $\X(\tau)$ be a weakly holomorphic vvaf associated to $(\rho,\rH)$. Now one may ask for a lifted form $\widetilde{X}(\tau),$ which is also a weakly holomorphic vvaf associated with $\widetilde{\rho}.$ Fix a cusp $\cu$ of $\rG$ and let $\{g_{i,j}\}$ be the set of coset representatives of $\rH$ in $\rG,$ as described earlier. We then define the induced function $\widetilde{\X}^{(\cu)}:\h \to \C^{dm}$ by setting,
$$\tau \mapsto \bigg( \X(g_{i,j}^{-1}\tau)\bigg)^{\mathfrak{t}}_{\substack{1\leq i\leq n_{\cu}\\0\leq j<h_i}}.$$
The reader can note that, given any cusp $\cu$ of $\rG,$ we are uniquely lifting the vvaf $\X(\tau)$ to $\widetilde{\X}^{(\cu)}(\tau),$ because $g_{i,j}$ are well defined. Of course, $\widetilde{\X}^{(\cu)}(\tau)$ is just a vector-valued holomorphic function on $\h$ right now, because $\X(\tau)$ is a weakly holomorphic vvaf. To make sure that $\widetilde{\X}^{(\cu)}(\tau)$ is a weakly holomorphic vvaf (be it admissible or logarithmic), we first need to check the functional equation with respect to the induced representation $\widetilde{\rho}$. Note that,
\begin{align*} \footnotesize
\widetilde{\X}^{(\cu)}(\g\tau) = \left(\begin{array}{ccc}
\X(\g_{1}^{-1}\g\tau)\\
\X(\g_{2}^{-1}\g\tau)\\ 
\vdots\\ 
\X(\g_{d}^{-1}\g\tau)\\
\end{array} \right) &= \left( \begin{array}{ccc}
\X(\g_{1}^{-1}\g\g_{j_{1}}\g_{j_{1}}^{-1}\tau)\\
\X(\g_{2}^{-1}\g \g_{j_{2}}\g_{j_{2}}^{-1}\tau) \\
\vdots\\
\X(\g_{d}^{-1}\g \g_{j_{d}}\g_{j_{d}}^{-1}\tau)\\
\end{array} \right) \\  
&=\left( \begin{array}{ccc}
j(\g_{1}^{-1}\g \g_{j_{1}},\tau)^k\rho(\g_{1}^{-1}\g \g_{j_{1}}) \X(\g_{j_{1}}^{-1}\tau) \\
j(\g_{2}^{-1}\g \g_{j_{2}},\tau)^k\rho(\g_{2}^{-1}\g \g_{j_{2}}) \X(\g_{j_{2}}^{-1}\tau) \\
\vdots\\
j(\g_{d}^{-1}\g \g_{j_{d}},\tau)^k\rho(\g_{d}^{-1}\g \g_{j_{d}}) \X(\g_{j_{d}}^{-1}\tau) \\
\end{array} \right) \,,\end{align*}
where $\{\g_i| 1\leq i\leq d\}$ is the set $\left\{g_{i,j}| 1\leq i\leq n_{\cu}, 0\leq j<h_i\right\}.$ To satisfy the functional equation property, we need that
$$j(\g_{i}^{-1}\g \g_{j_{i}},\tau)^k=j(\g,\tau)^k,~\forall~ 1\leq i\leq d,\gamma \in \rG.$$
We can make sure this if, $r_2(\g_{i}^{-1}\g \g_{j_{i}})=r_2(\g),~\forall~1\leq i\leq d,~\gamma\in \rG, $ or simply if the weight $k=0,$ where $r_2(\cdot)$ denotes the second row of the corresponding matrix. Of course, it is unlikely that $r_2(\g_{i}^{-1}\g \g_{j_{i}})=r_2(\g),~\forall~1\leq i\leq d,~\gamma\in \rG $ would always hold. So to be on the safer side, we stick to the weight $k=0$ case to make sure that the functional equation is satisfied. Before discussing the moderate growth condition, let us first define a suitable lift for any arbitrary weight case. For this, let us first recall a reduction trick introduced in~\cite{Bajpai2019}. The idea is to find a scalar-valued cusp form $\Delta_{\rG}(\tau)$ of non-zero weight, which is holomorphic on $\mathbb{H}^{*}_{\rG}$ and nonzero everywhere, except at $\infty.$ For instance if $\rG$ is given by the modular group $\text{SL}_2(\mathbb{Z}),$ then one can take 
$$\Delta_{\rG}(\tau)=(\eta(\tau))^{24}=q\prod_{n\geq 1}(1-q^n)^{24},$$
where $\eta(\tau)$ is the Dedekind eta function. For the existence in the general case, the reader may look at the exposition in~\cite[Section 4]{Bajpai2019}. 

Now given a weakly holomorphic admissible or logarithmic vvaf $\X(\tau)$ of weight $k,$ let us denote $\X_0(\tau)=\Delta_{\rH}^{-k/w_{\rH}}(\tau)\X(\tau),$ where $w_{\rH}$ is the weight of $\Delta_{\rH}(\tau).$ It is clear that, $\X_{0}(\tau)$ is a weakly holomorphic vvaf of weight $0,$ associated to the representation $\rho \otimes \nu_{\rH}^{-k},$ where $\nu_{\rH}$ is the rank $1$ representation associated to $\Delta_{\rH}^{1/w_{\rH}}(\tau).$ One can then consider a reduction to a weight $0$ automorphic form by $\X(\tau) \mapsto \X_0(\tau).$ In particular, we now have a recipe to lift to a vvaf of arbitrary weight by considering the map 
$$\X(\tau) \mapsto \X_0(\tau) \mapsto \widetilde{\X_0}^{(\cu)}(\tau)\Delta_{\rG}^{k/w_{\rG}}(\tau):= \Delta_{\rG}^{k/w_{\rG}}(\tau)\bigg( \X_0(g_{i,j}^{-1}\tau)\bigg)^{\mathfrak{t}}_{\substack{1\leq i\leq n_{\cu}\\0\leq j<h_i}}.$$ Note that $\widetilde{\X_0}^{(\cu)}(\tau)\Delta_{\rG}^{k/w_{\rG}}(\tau)$ satisfies the functional equation with respect to the representation $(\widetilde{\rho\otimes \nu_{\rH}})^{-k}\otimes \nu_{\rG}^{k}=\widetilde{\rho}.$ Therefore, we refine our definition of lift by setting 
$$\widetilde{\X}^{(\cu)}(\tau):=\widetilde{\X_0}^{(\cu)}(\tau)\Delta_{\rG}^{k/w_{\rG}}(\tau).$$
If $\infty$ is a cusp of $\rG,$ then we set $\widetilde{\X}(\tau):=\widetilde{\X}^{(\infty)}(\tau)$ as the definition of lifted form. If not, we pick any cusp $\cu$ of $\rG$ and set $\widetilde{\X}(\tau):=\widetilde{\X}^{(\cu)}|_kA_{\cu}(\tau),$ which satisfies the required functional equation with respect to the representation $A_{\cu}^{-1}\gamma A_{\cu}\mapsto \widetilde{\rho}(\gamma).$

\subsection{Preservation of the cuspidal properties} Before studying the behavior at the cusps, we first need to have a better understanding of the representatives of $\rH$ in $\rG.$ Suppose that $\infty$ is a cusp of $\rG$ and $\left\{\cu_i|1\leq i\leq n_{\infty}\right\}$ are the cusps of $\rH$ lying under $\infty,$ and $\{g_{i,j}\}$ be the set of coset representatives of $\rH$ in $\rG$ as described before, with the important property that, $g_{i,j}(\cu_i)=\infty.$ The following lemma is about a comparison with the set of all coset representatives $\left\{g_{i,j}| 1\leq i\leq n_{\infty}, 0\leq j<h_i\right\}$ inside $\rG$ and the set $\left\{A_{\cu_i}| 1\leq i\leq n_{\infty}\right\}$ inside $\text{SL}_2(\R).$

\begin{lem}\label{lem:gtu} There exists $a_{i,j}$ and $\alpha_{i,j}\in \mathbb{R}$ such that
$$ g_{i,j}\tau=a_{i,j}^2A_{\cu_i}^{-1}\tau+jh_{\infty}+\alpha_{i,j},~\forall~\tau \in\mathbb{H}.$$
\end{lem}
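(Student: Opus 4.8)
The plan is to unwind the definitions of $g_{i,j}$ and $A_{\cu_i}$ and track the Möbius action explicitly. Recall that $g_{i,j} = t_{\cu}^{\,j} A_i^{-1}$ with $\cu = \infty$, so $t_{\infty} = \overline{\tmt{1}{h_{\infty}}{0}{1}}$ and $g_{i,j} = \overline{\tmt{1}{jh_{\infty}}{0}{1}} A_i^{-1}$, where $A_i \in \rG$ satisfies $A_i(\infty) = \cu_i$. On the other hand $A_{\cu_i} = \overline{\tmt{\cu_i}{-1}{1}{0}}$ also sends $\infty$ to $\cu_i$. The key observation is that both $A_i^{-1}$ and $A_{\cu_i}^{-1}$ send $\cu_i$ to $\infty$, so $A_i^{-1} A_{\cu_i}$ fixes $\infty$; hence $A_i^{-1} A_{\cu_i}$ is upper triangular, i.e.\ of the form $\overline{\tmt{a_{i,j}}{\ast}{0}{a_{i,j}^{-1}}}$ for some $a_{i,j} \in \R^{\times}$ (I will absorb the $j$-dependence here or, more cleanly, name the scalar $a_i$ and let the additive constant carry the $j$). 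Equivalently $A_i^{-1} = \overline{\tmt{a_i}{b_i}{0}{a_i^{-1}}} A_{\cu_i}^{-1}$ for suitable real $a_i, b_i$.

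Next I would substitute this into $g_{i,j}$: writing $g_{i,j}\tau = t_{\infty}^{\,j}\big(A_i^{-1}\tau\big)$ and using that an upper-triangular Möbius transformation $\overline{\tmt{a}{b}{0}{a^{-1}}}$ acts by $\sigma \mapsto a^2\sigma + ab$, we get
\begin{align*}
g_{i,j}\tau &= t_{\infty}^{\,j}\Big( a_i^2\, A_{\cu_i}^{-1}\tau + a_i b_i \Big) \\
&= a_i^2\, A_{\cu_i}^{-1}\tau + a_i b_i + j h_{\infty}.
\end{align*}
Setting $a_{i,j} = a_i$ and $\alpha_{i,j} = a_i b_i$ (both real, independent of $j$ in this presentation, though the statement only claims existence) yields exactly $g_{i,j}\tau = a_{i,j}^2 A_{\cu_i}^{-1}\tau + j h_{\infty} + \alpha_{i,j}$, as required. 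One should double-check the compatibility of the $\pm I$ ambiguity: since everything is stated for elements of $\mathrm{SL}_2(\R)$ up to the chosen lifts, replacing a representative by its negative does not change the Möbius action, so the real numbers $a_{i,j}^2$, $\alpha_{i,j}$ are well defined regardless.

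I do not expect a serious obstacle here; the statement is essentially a bookkeeping lemma. The one point requiring a little care is justifying that $A_i^{-1} A_{\cu_i}$ is genuinely upper triangular — this follows because it fixes $\infty$ under the Möbius action and every element of $\mathrm{PSL}_2(\R)$ fixing $\infty$ has lower-left entry zero — and then checking that conjugating/multiplying by $t_{\infty}^{\,j}$ only shifts the translation part by $j h_{\infty}$ without touching the dilation factor $a_i^2$, which is immediate since $t_{\infty}$ is itself a translation. The scalars $a_{i,j}$ are of course tied to the cusp widths: comparing $t_i = A_i t_{\infty}^{h_i} A_i^{-1}$ with $t_{\cu_i} = A_{\cu_i} t^{h_{\cu_i}} A_{\cu_i}^{-1}$ and using $h_{\cu_i} = h_i h_{\infty}$ (from $h_i = h_{\cu_i}/k_{\cu}$ with $k_{\cu} = h_{\infty}$) pins down $a_i^2$ as the relevant normalization constant, but for the bare statement of the lemma only the existence of such reals is needed.
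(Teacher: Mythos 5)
Your proof is correct and follows essentially the same route as the paper: both decompose $A_i^{-1}A_{\cu_i}$ as an upper-triangular matrix $\tmt{a}{\alpha}{0}{1/a}$ because it fixes $\infty$, then compute the Möbius action and absorb the $t_{\infty}^{\,j}$ translation to get $g_{i,j}\tau=a^2A_{\cu_i}^{-1}\tau+jh_{\infty}+a\alpha$. Your added remarks on the $\pm I$ ambiguity and the $j$-independence of the constants are fine but not needed.
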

\begin{proof} Recall that, $g_{i,j} = t_{\infty}^jA_i^{-1}$ where $A_i(\infty)=\cu_i.$ We also know that $A_{\cu_i}(\infty)=\cu_i,$ in particular, $A_i^{-1}A_{\cu_i}(\infty)=\infty.$ On the other hand, $A_i^{-1}A_{\cu_i}\in \text{SL}_2(\mathbb{R}).$ In particular, $A_i^{-1}A_{\cu_i}=\tmt{a}{\alpha}{0}{1/a}$ for some $a,\alpha \in \mathbb{R}.$ We then have,
\begin{equation*}\label{eqn:gtau}
g_{i,j}\tau=t_{\infty}^{j}A_i^{-1}\tau=t_{\infty}^{j}\tmt{a}{\alpha}{0}{1/a}A_{\cu_i}^{-1}\tau=a^2A_{\cu_i}^{-1}\tau+jh_{\infty}+\alpha a.
\end{equation*}
This completes the proof by taking $a_{i,j}:=a $ and $\alpha_{i,j}:=\alpha a.$
\end{proof}

We recall that any classical holomorphic modular form does not have weight $0,$ unless it is a constant function. In those cases, the representation under consideration has a finite image. However, the same may not be true if the representation does not have a finite image. Consider the representation $\mathbb{I}_0:\text{SL}_2(\Z)\to \text{GL}_2(\C),$ given by $\gamma \mapsto \gamma,$ and the holomorphic function $\Y:\mathbb{H}\to \C^2$ given by $\tau \mapsto (\tau,1)$. Note that $\Y(\tau)$ is a holomorphic logarithmic vvaf of weight $-1$ associated with $\mathbb{I}_0$. In particular, $\Y'(\tau):=\Y(\tau)\Delta(\tau)^{\frac{1}{12}}$ is a non-constant holomorphic logarithmic vvaf of weight $0$ associated to the representation $\rho':=\mathbb{I}_0 \otimes \nu_{\text{SL}_2(\Z)}.$ In fact, following~\cite{Gannon1}, we know that the space of holomorphic logarithmic vvaf is a free module of rank two over the polynomial ring $\C[E_4,E_6]$, generated by $\Y'(\tau)$ and its modular derivative.

Moreover, the representation $\rho':\text{SL}_2(\Z)\to \text{GL}_2(\C)$ indeed have infinite image, because 
$$\norm{\rho'(t^n)}=\norm{\mathbb{I}_0 \otimes \nu_{\text{SL}_2(\Z)}(t^n)}=\norm{\mathbb{I}_0(t^n)}=n,$$
which follows from the explicit description of $\nu_{\text{SL}_2(\Z)}$, see~\cite[Page~7]{Bajpai2019}. Gannon in~\cite{Gannon1} gave an explicit description of the logarithmic representations of rank $2$, associated to $\text{SL}_2(\Z)$, and they all differ from $\rho'$ by some character of $\text{SL}_2(\Z).$ 

We are now ready to prove the required cuspidal properties of the lifted forms.
 \begin{lem}\label{lem:cuspidal} Let $\cu$ be an arbitrary cusp of $\rG$ and $\left\{\cu_i|1\leq i\leq n_{\cu}\right\}$ be the set of all inequivalent cusps of $\rH$ lying under $\cu.$ Then we have the following.
\begin{enumerate}
\item[(i)] If $\X(\tau)$ has moderate growth at all the cusps $\left\{\cu_i|1\leq i\leq n_{\cu}\right\},$ then the lifted form $\widetilde{\X}^{(\cu)}(\tau)$ has moderate growth at $\cu$ as well.
\item[(ii)] If $\X(\tau)$ is holomorphic (or vanishes) at all the cusps $\left\{\cu_i| 1\leq i\leq n_{\cu}\right\},$ then the lifted form $\widetilde{\X}^{(\cu)}(\tau)$ has same properties at the cusp $\cu,$ provided that the weight of $\X(\tau)$ is $0.$
\end{enumerate}
\end{lem}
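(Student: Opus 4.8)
\textbf{Proof plan for Lemma~\ref{lem:cuspidal}.}
The plan is to reduce both statements to the behaviour of the single scalar function $\X$ at the cusps $\cu_i$ of $\rH$, using the explicit formula from Lemma~\ref{lem:gtu} that relates $g_{i,j}\tau$ to $A_{\cu_i}^{-1}\tau$. First I would treat the case $\cu=\infty$, where the lifted form is literally $\widetilde{\X}^{(\infty)}(\tau)=\bigl(\X(g_{i,j}^{-1}\tau)\bigr)^{\mathfrak t}$; the general cusp $\cu$ then follows by applying $|_kA_\cu$ and using that the stroke operator sends moderate growth at $\infty$ to moderate growth at $\cu$ by definition. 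So throughout I assume $\infty$ is a cusp of $\rG$.

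For part (1): moderate growth of $\widetilde{\X}^{(\cu)}(\tau)$ at $\infty$ means each component $\X(g_{i,j}^{-1}\tau)$ is $O(\exp(c\,\mathrm{Im}\,\tau))$ as $\mathrm{Im}\,\tau\to\infty$. By Lemma~\ref{lem:gtu}, $g_{i,j}^{-1}$ is (up to the affine translation by $jh_\infty+\alpha_{i,j}$ and scaling by $a_{i,j}^2$, which I should invert carefully) comparable to $A_{\cu_i}^{-1}$, so $\mathrm{Im}(g_{i,j}^{-1}\tau)$ and $\mathrm{Im}(A_{\cu_i}^{-1}\tau)$ differ by the fixed positive factor $a_{i,j}^{-2}$ and in particular both tend to $\infty$ together. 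Hence moderate growth of $\X$ at the cusp $\cu_i$—which by definition says $\X|_kA_{\cu_i}$, equivalently $\X(A_{\cu_i}^{-1}\tau)$ up to the nowhere-vanishing automorphy factor, has moderate growth at $\infty$—transfers to moderate growth of $\X(g_{i,j}^{-1}\tau)$ at $\infty$, after absorbing the bounded-on-vertical-strips factor $j(g_{i,j}^{-1},\tau)$ into the exponential bound. Taking the maximum over the finitely many pairs $(i,j)$ gives the claim for $\widetilde{\X}^{(\infty)}$, and then the stroke operator handles a general $\cu$.

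For part (2), with weight $0$: here the lift is exactly $\widetilde{\X}^{(\cu)}(\tau)=\bigl(\X(g_{i,j}^{-1}\tau)\bigr)^{\mathfrak t}$ with no $\Delta$-factors, and holomorphy (resp.\ vanishing) at the cusp $\infty$ is read off from the Fourier/$\tq$-expansion~\eqref{eqn:admf} or~\eqref{eqn:finite} of $\widetilde{\X}^{(\cu)}$ in terms of the cusp width $k_\cu$ of $\cu$ in $\rG$. The key point is that the $\tq_\cu$-expansion of $\X(g_{i,j}^{-1}\tau)$ is obtained from the $\tq_{\cu_i}$-expansion of $\X$ at $\cu_i$ by the change of variable coming from Lemma~\ref{lem:gtu}: the affine substitution $A_{\cu_i}^{-1}\tau\mapsto a_{i,j}^2A_{\cu_i}^{-1}\tau+jh_\infty+\alpha_{i,j}$ together with the relation $h_{\cu_i}=h_i\,k_\cu$ means that $\tq_{\cu_i}^{\,\mu(\lambda)+n}$ becomes a constant multiple of $\tq_\cu^{\,(\mu(\lambda)+n)h_i}$ times a fixed unit-modulus phase, so the exponents only get scaled by the positive integer $h_i$. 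Consequently, if every $h_{\lambda,j,\cu_i}$-expansion of $\X$ starts at $n\ge 0$ (holomorphic) then so does every block of $\widetilde{\X}^{(\cu)}$, and likewise $n>0$ (vanishing) is preserved; moreover the unitary eigenvalue condition is inherited since the eigenvalues of $\widetilde\rho(t_\cu)$ are $h_i$-th roots of the eigenvalues of $\rho(t_i)$, by the block-lower-triangular shape of $\widetilde\rho(t_\cu)$ recalled in Section~\ref{sec:desc}.

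The main obstacle I anticipate is bookkeeping rather than conceptual: one must keep careful track of the automorphy factors $j(g_{i,j}^{-1},\tau)$ and the scalars $a_{i,j}$ from Lemma~\ref{lem:gtu} when translating "moderate growth at $\cu_i$" (a statement about $\X|_kA_{\cu_i}$) into a statement about $\X\circ g_{i,j}^{-1}$, and—in part (2)—to check that the change of variable genuinely multiplies Fourier exponents by $h_i$ and does not introduce negative powers through the $\mu(\lambda)$-shift; this is exactly where the hypothesis $k=0$ and the identity $\sum_i h_i=d$ are used, and it is why part (2) is restricted to weight $0$ while part (1) is not.
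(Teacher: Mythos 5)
Your argument is correct, and for part (1) it is essentially the paper's proof: reduce to the cusp $\infty$, use Lemma~\ref{lem:gtu} to convert moderate growth at each $\cu_i$ (a statement about $\X|_kA_{\cu_i}$) into an exponential bound for each component $\X(g_{i,j}^{-1}\tau)$, and take the maximum over the finitely many pairs $(i,j)$. The one point of imprecision there is that for weight $k\neq 0$ the lift is by definition $\Delta_{\rG'}^{k/w_{\rG'}}(\tau)\bigl(\X_0(g_{i,j}^{-1}\tau)\bigr)^{\mathfrak t}$ with $\X_0=\Delta_{\rH'}^{-k/w_{\rH'}}\X$, not $\bigl(\X(g_{i,j}^{-1}\tau)\bigr)^{\mathfrak t}$; the paper disposes of this by observing that all powers of $\Delta_{\rH'}$ and $\Delta_{\rG'}$ have moderate growth at the cusps, which replaces your bookkeeping with the automorphy factor $j(g_{i,j}^{-1},\tau)$ (which, incidentally, is only polynomially bounded on vertical strips, not bounded, though this is harmless since it is absorbed into the exponential). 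For part (2) you take a genuinely different, more computational route: you track the $\tq_{\cu_i}$-expansion through the affine change of variable of Lemma~\ref{lem:gtu} and argue that the Fourier exponents are rescaled by the positive factor $h_i$ (with the unitarity of the induced eigenvalues coming from the block shape of $\widetilde{\rho}(t_\cu)$), so that nonnegative (resp.\ positive) leading exponents are preserved. The paper instead reuses the estimate from part (1): the growth constant $c$ is $0$ in the holomorphic case and negative in the cuspidal case, and Lemma~\ref{lem:gtu} shows $c$ and $c'$ differ by a positive factor, so boundedness or decay of each component at $\infty$ follows immediately. Your expansion-level argument is heavier but buys more: it is essentially the computation carried out later for Theorem~\ref{thm:relation}, where the exponent rescaling (and the identity relating $h_{\cu_i}$, $h_i$ and the cusp width of $\cu$, in which the scalar $a_{i,j}^2$ from Lemma~\ref{lem:gtu} must be accounted for) is made explicit; the paper's growth-constant argument is shorter and suffices for the qualitative statement of the lemma.
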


\begin{proof} For both of the parts, it is enough to prove that $\widetilde{\X}^{(\cu)}(\tau)$ satisfy the required cuspidal properties at the cusp $\infty.$ 

Let us first prove part $(i)$. It is clear that $\X_0(\tau)$ has moderate growth at all the cusps $\left\{\cu_i| 1\leq i\leq n_{\infty}\right\},$ because any power of $\Delta_{\rH}(\tau)$ has the same property. Now we shall show that all the components of $\widetilde{\X}_{0}(\tau)$ have moderate growth at $\infty.$ Let $\Y(\tau):=\X_0(g_{i,j}^{-1}\tau)$ be such a component. Since $\X_0(\tau)$ has moderate growth at all the cusps $\left\{\cu_i| 1\leq i\leq n_{\infty}\right\}$ there exists a constant $c\in \mathbb{R}$ such that $\norm{\X_0(\tau)}\ll |e^{2\pi ic A_{\cu_i}^{-1}\tau}|$ as $\text{im}(\tau)\to \infty.$ It follows from Lemma~\ref{lem:gtu} that, $\norm{\Y(\tau)}\ll |e^{2\pi ic'\tau}|,$ for some constant $c'\in \R,$ as $\text{im}(\tau)\to \infty.$ This shows that, $\widetilde{\X}_{0}(\tau)$ has moderate growth at $\infty.$ On the other hand, any power of $\Delta_{\rG}(\tau)$ has moderate growth at $\infty$ as well, and this completes the proof of part $(i).$ 

Let us now prove part $(ii)$. To show that $\widetilde{\X}(\tau)$ is holomorphic (or vanishes) at the cusp $\infty,$ we need to show that all the components are bounded (or vanishes) as $\text{im}(\tau)\to \infty.$ The constant $c'$ appearing in the previous paragraph is $0$ when $\X(\tau)$ is holomorphic and negative when $\X(\tau)$ is a cuspform. Moreover, it can also be seen from Lemma~\ref{lem:gtu} that the constants $c$ and $c'$ from the previous paragraph are a positive multiple of each other. Therefore $\widetilde{\X}(\tau)$ has the similar cuspidal properties as $\X(\tau).$ This shows that the lifted form also shares the same cuspidal properties when the weight is $0.$
\end{proof}
\section{Lifting of logarithmic vector-valued automorphic forms}\label{sec:lift}
It was proved, by the first author in~\cite{Bajpai2019}, that the induction of an admissible representation is admissible. In this section, we shall study the induction of non-admissible, i.e., logarithmic representations. In this regard, we have the following.
\begin{prop}\label{prop:logarithmic} Let $\cu$ be an arbitrary cusp of $\rG$ and $\{\cu_i| 1\leq i\leq n_{\cu}\}$ be the set of inequivalent cusps of $\rH$ for which $\rG\cdot \cu=\bigcup H\cdot \cu_i.$ Then,
\begin{enumerate}
\item[(i)] If $\rho(t_{i})$ is not diagonalizable for some $i,$ then $\widetilde{\rho}(t_{\cu})$ is not diagonalizable.
\item[(ii)] In particular if $\rho$ is a logarithmic representation, then $\widetilde{\rho}$ is a logarithmic representation as well. 
\end{enumerate}
\end{prop}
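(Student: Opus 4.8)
The plan is to work directly with the explicit block form of $\widetilde{\rho}(t_\cu)$ recorded in Section~\ref{sec:desc}. Recall that, with the coset representatives $g_{ij} = t_\cu^j A_i^{-1}$, the matrix $\widetilde{\rho}(t_\cu)$ is block-diagonal, with one block $B_i$ of size $mh_i \times mh_i$ for each $1 \le i \le n_\cu$, and each $B_i$ has the companion-type shape with $\rho(t_i)$ in the top-right $m \times m$ corner and identity matrices $I_{m\times m}$ along the subdiagonal of blocks. Since diagonalizability of a block-diagonal matrix is equivalent to diagonalizability of each diagonal block, it suffices to show that if $\rho(t_i)$ is not diagonalizable, then the corresponding block $B_i$ is not diagonalizable. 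This reduces the whole statement to a pure linear-algebra fact about this single companion-type block.

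For that linear-algebra step, I would argue as follows. First observe that $B_i^{h_i}$ is the block-diagonal matrix $\mathrm{diag}(\rho(t_i),\dots,\rho(t_i))$ (each of the $h_i$ diagonal $m\times m$ blocks equal to $\rho(t_i)$); this is immediate from the companion shape, since applying $B_i$ cyclically shifts the blocks and picks up one factor of $\rho(t_i)$ after a full cycle. Now if $B_i$ were diagonalizable, then so would be any polynomial in $B_i$, in particular $B_i^{h_i}$; but a block-diagonal matrix with all diagonal blocks equal to $\rho(t_i)$ is diagonalizable only if $\rho(t_i)$ is diagonalizable, contradicting the hypothesis. This proves part (1). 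Part (2) is then an immediate consequence: if $\rho$ is logarithmic, then by definition $\rho(t_j)$ is non-diagonalizable for some cusp $\cu_j$ of $\rH$; that cusp lies under some cusp $\cu$ of $\rG$, so part (1) applied to that $\cu$ shows $\widetilde{\rho}(t_\cu)$ is non-diagonalizable, and since the unitary-eigenvalue condition for $\widetilde{\rho}$ at parabolic elements is already known (it is cited from~\cite[Sect.~6]{BBF} in the introduction, and also follows since $R(t_\cu)$ consists of the eigenvalues of $\rho(t_i)$, which are unitary), $\widetilde{\rho}$ is a logarithmic representation.

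The main obstacle I anticipate is purely bookkeeping rather than conceptual: one must be careful that the companion block $B_i$ as written (with $\rho(t_i)$ in the corner and $I$'s on the subdiagonal, not the canonical Jordan transpose shape) really does satisfy $B_i^{h_i} = \mathrm{diag}(\rho(t_i),\dots,\rho(t_i))$ and not, say, a conjugate or a shifted version; checking this amounts to tracking how $B_i$ permutes the $h_i$ coordinate blocks of $\C^{mh_i}$ and confirming the single factor of $\rho(t_i)$ appears after exactly $h_i$ steps. Once that identity is verified, the implication ``$B_i$ diagonalizable $\Rightarrow$ $B_i^{h_i}$ diagonalizable $\Rightarrow$ $\rho(t_i)$ diagonalizable'' is standard (a matrix is diagonalizable iff its minimal polynomial has no repeated roots, and the minimal polynomial of a polynomial in a diagonalizable matrix again splits into distinct linear factors; and the minimal polynomial of $\mathrm{diag}(\rho(t_i),\dots,\rho(t_i))$ equals that of $\rho(t_i)$). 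It is worth noting that no analytic input is needed here at all — this proposition is entirely about the induced representation, and the vvaf-theoretic content (Theorem~\ref{thm:main}(b), preservation of weak holomorphy) is handled separately via Lemma~\ref{lem:cuspidal}.
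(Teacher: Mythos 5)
Your proof is correct, and it reaches the paper's conclusion by a slightly more concrete route than the paper's own argument. The paper does not compute with the companion block $B_i$ directly: it first uses a finite-index (pigeonhole) argument to produce some integer $n$ with $g_{i,j}^{-1}t_{\cu}^{n}g_{i,j}\in\rH'$ for all pairs $(i,j)$, observes that $\widetilde{\rho}(t_{\cu}^{n})$ is then block diagonal with blocks $\rho(g_{i,j}^{-1}t_{\cu}^{n}g_{i,j})=\rho(t_i^{m})$, and finishes by noting that a nonzero power of a nontrivial Jordan block $J_{\lambda}$ is again non-diagonalizable (this needs $\lambda\neq 0$, which holds since $\rho$ lands in $\mathrm{GL}_m$); the final step, that non-diagonalizability of $\widetilde{\rho}(t_{\cu}^n)$ forces non-diagonalizability of $\widetilde{\rho}(t_{\cu})$, is the same ``diagonalizability passes to powers'' principle you use. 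Your version replaces the abstract search for $n$ by the exact identity $B_i^{h_i}=\mathrm{diag}(\rho(t_i),\dots,\rho(t_i))$, which pins down the power explicitly, avoids the Jordan-block-power lemma altogether (you only need that $\mathrm{diag}(\rho(t_i),\dots,\rho(t_i))$ is diagonalizable iff $\rho(t_i)$ is), and is in fact the computation the paper itself gestures at: the remark following its proof notes one may take $n=\mathrm{lcm}\{h_i\}$ via the coset description of Section~\ref{sec:desc}, and the proof of Corollary~\ref{cor:eigenvalues}(b) uses precisely this power-of-the-companion-block structure. Two small points: in part (2), ``logarithmic'' literally means non-diagonalizable at some parabolic element of $\rH'$, so one should add the one-line reduction (conjugation plus passing to powers) to non-diagonalizability of some $\rho(t_j)$, in line with the convention stated in Section 3 of the paper; and your handling of the unitary-eigenvalue condition for $\widetilde{\rho}$ (citing the induced eigenvalue description or the earlier reference) is fine --- the paper's own proof of (2) leaves that point to the cited literature as well.
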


\begin{proof} Let us start with considering the coset representatives $\{g_{i,j}\}$ of $\rH$ in $\rG$ from Section~\ref{sec:desc}. We first claim that, for any pair $(i,j)$, some non-trivial power of $\gamma_{i,j}=g_{i,j}^{-1}t_{\cu}g_{i,j}$ is in $\rH$. To prove this, we start by noting that there exists $n_{i,j,1}$ and $n_{i,j,2}$ such that $g_{i,j}^{n_{i,j,1}}\rH=g_{i,j}^{n_{i,j,2}}\rH$. This is because $\rH$ has a finite index in $\rG.$ In particular, for each pair $(i,j),$ there exists some integer $n_{i,j}$ such that $g_{i,j}^{-1}t_{\cu}^{n_{i,j}}g_{i,j}\in \rH$. Let us now consider $n=\text{lcm}\left\{n_{i,j}| 1\leq i\leq n_{\cu}, 1\leq j\leq h_i \right\}$. In particular, $g_{i,j}^{-1}t_{\cu}^{n}g_{i,j}\in \rH$ for each pair $(i,j).$ Therefore, $\widetilde{\rho}(t_{\cu}^n)$ is a block diagonal matrix where each block is of the form $\rho(g_{i,j}^{-1}t_{\cu}^ng_{i,j})$. Note that $g_{i,j}^{-1}t_{\cu}^ng_{i,j}\in \rH$ and fixes the cusp $\cu_i,$ hence it is some non-trivial power of $t_i.$ Let us write $g_{i,j}^{-1}t_{\cu}^ng_{i,j}=t_i^m,$ where $m\neq 0$ is an integer. By the assumption, there exists some $i$ for which $\rho(t_{i})$ can be written in Jordan normal form, with a Jordan block, say $J_{\lambda}$, of size greater than $1.$ In particular, $\rho(t_{i}^{m})$ can be written in a block diagonal form, where one of the blocks is $J_{\lambda}^m,$ which is not diagonalizable for any integer $m\neq 0$. This completes the proof of part $(i)$.

For the proof of part $(ii)$, let $\cu_i$ be a cusp of $\rH$ for which $\rho(t_i)$ is not diagonalizable. Now $\cu_i$ is a cusp of $\rG$, with $\cu_i$ itself lying under it as a cusp of $\rH.$ It the follows from the part $(i)$ that $\widetilde{\rho}(t_{\cu})$ is not diagonalizable, which completes the proof.
\end{proof}

\begin{remark}\rm
It is clear that $n_{i,j}\leq d,$ for each pair $(i,j),$ where $d$ is the index of $\rH$ in $\rG.$ In other words, $n$ is crudely bounded by $d^d.$ However, if $\rH$ is normal in $\rG,$ one can always take $n_{i,j}$ to be $d.$ In particular, one can take $n=d.$ In fact, the discussion in Section~\ref{sec:desc} allows us to take $n=\text{lcm}\left\{h_i | 1\leq i\leq n_{\cu}\right\}.$
\end{remark} 
\subsection{Eigenvalues of the induced representation} Let us first recall the usual setup. Fix a cusp $\cu$ in $\rG,$ and consider the decomposition
$$\rG\cdot \cu=\bigcup_{i=1}^{n_{\cu}}\rH\cdot \cu_i.$$
If we know all the eigenvalues of $\rho(t_i)$, then what do we know about the eigenvalues of $\widetilde{\rho}(t_{\cu})?$ The first author studied this when $\rho$ is an admissible representation and explicitly described the corresponding eigenspaces. Of course, the question arises if $\rho$ is not admissible. In other words, if one of the $\rho(t_{i})$ is not diagonalizable. We shall answer this in the following lemma. 
\begin{lem}\label{cor:eigenvalues} Let $\cu$ be any arbitrary cusp of $\rG,$ and $\left\{\cu_i| 1\leq i\leq n_{\cu} \right\}$ be the set of inequivalent cusps of $\rH$ lying under $\cu.$ Let $\left\{\lambda_{(i,k)} | 1\leq k\leq e_i \right\}$ be the set of distinct eigenvalues of $\rho(t_i)$ for each $1\leq i\leq n_{\cu}.$ Then, the following statements are true.
\begin{enumerate}
\item [(i)] The eigenvalues of $\widetilde{\rho}(t_{\cu})$ are precisely given by the set
$$\left\{\zeta\lambda_{(i,k)}^{1/h_i}| 1\leq i\leq n_{\cu}\,,1\leq k\leq e_i\,, \zeta\in R_{h_i}\right\},$$
where $e_i$ is the number of distinct eigenvalues of $\rho(t_i),$ and $R_{h_i}$ is the set of all $h_i^{th}$ root of unity.
\item [(ii)] If there is a Jordan block of size $m(\lambda_{(i,k)})$ in $\rho(t_i)$ associated to the eigenvalue $\lambda_{(i,k)},$ then there is a Jordan block in $\widetilde{\rho}(t_{\cu})$ of size $$m\left(\zeta\lambda_{(i,k)}^{1/h_i}\right):=m(\lambda_{(i,k)}),~\forall ~ 1\leq i\leq n_{\cu}\,,1\leq k\leq e_i\,, \zeta\in R_{h_i}.$$
\end{enumerate}
\end{lem}
Before proving Lemma~\ref{cor:eigenvalues}, we need to discuss generalized permutation matrices. These matrices are square matrices where each row and column contain precisely one non-zero entry. Unlike classical permutation matrices where the non-zero entry must be $1$, in generalized permutation matrices, this entry can take any non-zero value. Any invertible generalized permutation matrix can be expressed as the product of an invertible diagonal matrix and a permutation matrix. In fact, the group of all such matrices in $\text{GL}_h(\R)$ is isomorophic to $S_h\rtimes \Delta_h(\R)$, where $S_h$ is the symmetric group, and $\Delta_h(\R)$ be the set of diagonal matrices in $\text{GL}_h(\R)$. The semidirect product above is defined by the natural action of $S_h$ on $\Delta_h(\R)$ by permuting coordinates. With this identification, or simply by matrix multiplications, the reader may note that
$$\begin{psmallmatrix}
    0 & & &  & a\\ 
    1 &  \ddots&  & &\\ 
     & \ddots &  & \ddots &\\
     &   & 1 &  &0\\ 
 \end{psmallmatrix}^h_{h \times h}=aI_{h},~\forall a \in \R\setminus \{0\}.$$
Similarly, we have the following for any non-zero matrix $A\in \text{GL}_m(\R)$.
\begin{equation}\label{eqn:matrix}
\begin{psmallmatrix}
    0 & & &  & A\\ 
    I_{m} &  \ddots&  & &\\ 
     & \ddots &  & \ddots &\\
     &   & I_m &  &0\\ 
 \end{psmallmatrix}^h_{hm \times hm}=AI_{hm}.\end{equation}

\begin{proof}[Proof of Lemma~\ref{cor:eigenvalues}]
Let $v_{(i,k)}$ be an eigenvector of $\rho(t_i)$ associated to the eigenvalue $\lambda_{(i,k)}.$ Then it follows from the description of the block form given in Section~\ref{sec:liftinduction} that there exists a set of vectors $\left\{v_{(i,k,\zeta)}| 1\leq k\leq e_i, \zeta\in R_{h_i}\right\}$, whose $i^{\text{th}}$ block is of the form
$\left(\lambda_{(i,k)}^{1-1/h_i}\zeta^{-1} v_{(i,k)},\lambda_{(i,k)}^{1-2/h_i}\zeta^{-2}v_{(i,k)},\cdots, v_{(i,k)}\right)^t$, and all other entries are zero, satisfying that
\begin{equation*}\label{eqn:m}
\widetilde{\rho}(t_{\cu})(v_{(i,k,\zeta)})=\zeta\lambda_{(i,k)}^{1/h_i}v_{(i,k,\zeta)},~\forall~1\leq i\leq n_{\cu},1\leq k\leq e_i,\zeta \in R_{h_i}.
\end{equation*}
This shows that the set of eigenvalues of $\widetilde{\rho}(t_{\cu})$ contains $$\left\{\zeta\lambda_{(i,k)}^{1/h_i}| 1\leq i\leq n_{\cu}\,,1\leq k\leq e_i\,, \zeta\in R_{h_i}\right\}.$$ On the other hand, let us recall from Section~\ref{sec:liftinduction} that $\widetilde{\rho}(t_{\cu})$ is in a block diagonal form, where each block is of the form
 $$\begin{psmallmatrix}
    0 & & &  & \rho(t_i)\\ 
    I_m &  \ddots&  & &\\ 
     & \ddots &  & \ddots &\\
     &   & I_m &  &0\\ 
 \end{psmallmatrix}_{mh_i\times mh_i}.$$
If $\lambda'$ is any eigenvalue of such a block, we can derive from~(\ref{eqn:matrix}) that $(\lambda')^{h_i}=\lambda_{(i,k)}$ for some $1\leq k\leq e_i$. This provides the comprehensive description of the eigenvalues of $\widetilde{\rho}(t_{\cu})$, as intended.

To prove $(ii)$, considering $h=h_1h_2\cdots h_{n_{\cu}}$. From (\ref{eqn:matrix}), it follows that $\widetilde{\rho}(t_{\cu}^h)$ is in block diagonal form, where each block is precisely $\rho(t_i)^{h/h_i}$ for all $1\leq i\leq n_{\cu}$. Additionally, each such block appears $h_i$ times.

On the other hand, it is a known result that for any non-trivial Jordan block $J$, and for any integer $h\neq 0$, $J^h$ is also a non-trivial Jordan block of the same size as $J$.\footnote{This is a fact from linear algebra. For proof, the reader may refer to the answer given by Oscar Cunningham at https://math.stackexchange.com/questions/1849839/jordan-form-of-a-power-of-jordan-block.} This completes the proof.
\end{proof}
We now have all the required tools to prove the main result of this article.
\subsection{Proof of Theorem~\ref{thm:main}} Part $(a)$ is a consequence of part $(i)$ of Lemma~\ref{lem:cuspidal} and part $(i)$ of Proposition~\ref{prop:logarithmic}. Part $(b)$ follows from part $(ii)$ of Proposition~\ref{prop:logarithmic}.
\subsection{Space of the lifted vector-valued automorphic forms}
Let us consider $R_{\rG}$ to be the ring of weakly holomorphic scalar-valued automorphic forms associated with $\rG.$ Define $\m^{!}_{k}(\rho)$ (resp. $\m_{k,\text{hol}}(\rho)$ and $\m_{k,\text{cusp}}(\rho)$) be the set of all weakly holomorphic logarithmic vvaf (resp. holomorphic logarithmic vector-valued automorphic forms and logarithmic vector-valued cusp forms) of weight $k,$ associated to representation $\rho$ of $\rG.$ It is clear that $R_{\rG}$ acts on those spaces by means of the map $(f,\X)\mapsto f\X,$ for any $f\in R_{\rG}$ and $\X(\tau)$ from one of those spaces. As a consequence of the results proved earlier, we have the following consequence.
\begin{prop}\label{prop:isomorphism} Let $\rG$, $\mr{H}$, $\rho$ be as in Theorem~\ref{thm:main} and $k$ be an integer. Then there is a $\mr{R}_{\rG}$-module isomorphism between the following spaces.
\begin{enumerate}
\item[(i)] $\m^{!}_{k}(\rho)~\mathrm{and}~ \m^{!}_{k}(\widetilde{\rho}),$
\item[(ii)] $\m_{0,\mathrm{hol}}(\rho)~\mathrm{and} ~\m_{0,\mathrm{hol}}(\widetilde{\rho}),$ and
\item[(iii)] $\m_{0,\mathrm{cusp}}(\rho)~\mathrm{and}~\m_{0,\mathrm{cusp}}(\widetilde{\rho}).$
\end{enumerate}
\end{prop}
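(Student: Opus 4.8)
The plan is to build the $\mr{R}_{\rG'}$-module homomorphism in one direction from the lift $\X(\tau) \mapsto \widetilde{\X}(\tau)$ constructed in Section~\ref{sec:liftinduction}, and then to construct an inverse homomorphism, and finally check both directions respect the module structure and the three refinements (weakly holomorphic, holomorphic, cuspidal). First I would fix a cusp $\cu$ of $\rG$ (take $\cu = \infty$ if available, otherwise use the $A_{\cu}$-twist as in the definition of $\widetilde{\X}$) and the coset representatives $\{g_{i,j}\}$ from Section~\ref{sec:desc}. The map $\X \mapsto \widetilde{\X}$ is well-defined: for $k = 0$ it is literally $\tau \mapsto (\X(g_{i,j}^{-1}\tau))^{\mathfrak t}$, and for general $k$ it is the composite through the reduction $\X \mapsto \X_0 = \Delta_{\rH'}^{-k/w_{\rH'}}\X$ and back. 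That $\widetilde{\X}$ is again a weakly holomorphic logarithmic vvaf associated to $\widetilde{\rho}$ is exactly Theorem~\ref{thm:main}(b) together with Proposition~\ref{prop:logarithmic}(2); that it preserves holomorphy and vanishing at cusps in weight $0$ is Lemma~\ref{lem:cuspidal}(2). $\mr{R}_{\rG'}$-linearity is immediate because $\Delta_{\rG'}^{k/w_{\rG'}}$ and any $f \in \mr{R}_{\rG'}$ are scalar-valued and pull out of the vector components.

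The heart of the argument is constructing the inverse. Given a vvaf $\widetilde{\Y}(\tau)$ associated to $(\rG', \widetilde{\rho})$, I would recover a candidate $\X(\tau)$ by \emph{projecting onto the first block}: write $\widetilde{\Y}(\tau) = (\Y_{i,j}(\tau))^{\mathfrak t}_{1 \le i \le n_{\cu},\, 0 \le j < h_i}$ with $\Y_{i,j} : \h \to \C^m$, and set $\X(\tau) := \Y_{1,0}(\tau)$ (the component indexed by the representative $g_{1,0}$, which we may take to be the identity). One then checks, using the functional equation $\widetilde{\Y}|_k\g = \widetilde{\rho}(\g)\widetilde{\Y}$ and the explicit shape of $\widetilde{\rho}$ on coset representatives (the block structure recalled in Section~\ref{sec:desc} and used in Corollary~\ref{cor:eigenvalues}), that $\Y_{i,j}(\tau) = (\text{scalar automorphy factor}) \cdot \X(g_{i,j}^{-1}\tau)$ — i.e.\ $\widetilde{\Y}$ is forced to be the lift of its own first block. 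This is the standard Frobenius-reciprocity-type fact that $\mathrm{Ind}$ and "evaluate-and-induce" are mutually inverse, but it must be checked at the level of vvaf and not just representations: one verifies that $\X = \Y_{1,0}$ satisfies $\X|_k\g' = \rho(\g')\X$ for $\g' \in \rH'$ by restricting the functional equation of $\widetilde{\Y}$ to $\rH'$ and reading off the $(1,0)$-component, and that $\X$ inherits logarithmic-vvaf cuspidal behaviour at the cusps $\cu_i$ of $\rH$ from the Fourier/logarithmic expansions of $\widetilde{\Y}$ at $\cu$ via Corollary~\ref{cor:eigenvalues} and the change of variables in Lemma~\ref{lem:gtu}. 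For general weight $k$ this is again routed through the weight-$0$ reduction, using that $\Delta_{\rG'}^{1/w_{\rG'}}$ restricts on $\rH'$ to (a power of) $\Delta_{\rH'}^{1/w_{\rH'}}$ up to a character, so the twisting representations match up.

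The main obstacle I anticipate is bookkeeping the automorphy factors and characters in the general-weight reduction: one must be careful that $\nu_{\rH'}$ and $\nu_{\rG'}$ (the rank-one representations attached to $\Delta_{\rH'}^{1/w_{\rH'}}$ and $\Delta_{\rG'}^{1/w_{\rG'}}$) are compatible under restriction/induction, so that $(\widetilde{\rho \otimes \nu_{\rH'}^{-k}}) \otimes \nu_{\rG'}^{k} = \widetilde{\rho}$ exactly as asserted in Section~\ref{sec:liftinduction}, and that dividing back by $\Delta_{\rG'}^{k/w_{\rG'}}$ in the inverse map does not introduce spurious poles on $\h$ or at finite cusps (it only affects the cusp where $\Delta_{\rG'}$ vanishes). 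Once the weight-$0$ case is cleanly established as a bijection compatible with $\mr{R}_{\rG'}$-action and with the holomorphic/cuspidal subspaces — which is where Lemmas~\ref{lem:gtu}, \ref{lem:cuspidal} and Proposition~\ref{prop:logarithmic} do all the work — the general weakly holomorphic case in (1) follows formally by conjugating with $\Delta^{\pm k/w}$, and (2) and (3) are the weight-$0$ statements restricted to the holomorphic and cuspidal submodules, where Lemma~\ref{lem:cuspidal}(2) guarantees the bijection restricts. I would therefore organize the proof as: (i) the weight-$0$ bijection with its inverse and $\mr{R}_{\rG'}$-linearity; (ii) compatibility with the holomorphic and cusp-form subspaces, giving (2) and (3); (iii) the $\Delta$-twist to pass from weight $0$ to arbitrary $k$, giving (1).
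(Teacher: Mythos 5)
Your proposal is correct and follows essentially the same route as the paper: the forward map is the lift (justified by Theorem~\ref{thm:main} and Lemma~\ref{lem:cuspidal}(2)), the inverse is projection onto the first component, and the functional equation for $\widetilde{\rho}$ forces each component to satisfy $\Y_i(\tau)=\Y_1(\gamma_i^{-1}\tau)$, so every form in the induced space is the lift of its first block, with the general weight case handled by the $\Delta_{\rG'}^{\pm k/w_{\rG'}}$ reduction to weight $0$. Your write-up is in fact slightly more careful than the paper's about verifying the cuspidal behaviour of the recovered first component and the compatibility of the $\nu_{\rH'}$, $\nu_{\rG'}$ twists, but the underlying argument is the same.
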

\begin{proof} Given a weakly holomorphic vvaf $\X(\tau)$ from one of the $\m^{!}_{k}(\rho), \m_{0,\mathrm{hol}}(\rho)$ or $\m_{0,\mathrm{cusp}}(\rho),$ we consider its lift $\widetilde{\X}(\tau)$. It follows from Theorem~\ref{thm:main} and part $(ii)$ of Lemma~\ref{lem:cuspidal} that, indeed, this map is a possible candidate for the isomorphism in any of the parts $(i), (ii)$ or $(iii).$ Clearly this is an injection. On the other, consider a vvaf $\Y:=(\Y_1,\Y_2,\cdots,\Y_d)$ in one of the spaces $\m^{!}_{k}(\widetilde{\rho}), \m_{0,\mathrm{hol}}(\widetilde{\rho})$ or $\m_{0,\mathrm{cusp}}(\widetilde{\rho}).$ Recall that the weight $0$ form $\Y_0(\tau):= \Delta_{\rG}^{-k/w_{\rG}}(\tau)\Y(\tau),$ obtained from $\Y(\tau).$ Without loss of generality, we may assume that the weight of $\Y(\tau)$ is $0,$ if not, we can simply work with $\Y_0(\tau).$

Let us now fix an element $\gamma\in \rG.$ Then for each $i,$ there exists a unique $j:=j(i)$ such that $\Y_i(\gamma\tau)=\rho(\gamma_i^{-1}\gamma\gamma_j)\Y_j(\tau).$ Moreover, any such $\gamma_i^{-1}\gamma\gamma_j$ is in $\rH.$ In particular, if $\gamma_1=1$ and $\gamma\in \rH$, then $\gamma_{j(1)}=1$ and we have that $\Y_1(\gamma\tau)=\rho(\gamma)\Y(\tau).$ This implies that, $\Y_1 \in 
\m^{!}_{k}(\rho), \m_{0,\mathrm{hol}}(\rho)$ or $\m_{0,\mathrm{cusp}}(\rho).$ On the other hand if we take $\gamma=\gamma_i,$ then we must have that $j(i)=1,$ and hence $\Y_i(\tau)=\Y_1(\gamma_i^{-1}(\tau)).$ Therefore, $\Y(\tau)=\widetilde{\Y_1}(\tau).$ This gives the required isomorphism between the given spaces.
\end{proof}

\section{Existence of logarithmic vector-valued automorphic forms}\label{sec:exist} In this section, we shall discuss the existence of vvaf associated with a given representation. To begin with, let us first recall~\cite[Section 4]{Bajpai2019} the scalar-valued cusp form $\Delta_{\rG}(\tau)$ of non-zero weight, which is holomorphic on $\mathbb{H}^{*}_{\rG}$ and nonzero everywhere, except at $\infty$. Moreover, let $\nu_{\rH}$ be the rank $1$ representation associated to $\Delta_{\rH}^{1/w_{\rH}}(\tau)$.
\begin{lem}\label{lem:exist}
Let $\rG$ be any Fuchsian group of the first kind, and $k$ be any integer. There exists a weakly holomorphic logarithmic vvaf of weight $k,$ associated with some representation of $\rG.$

\end{lem}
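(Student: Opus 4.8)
The plan is to produce, for any Fuchsian group $\rG$ of the first kind and any integer $k$, an explicit (typically infinite-image, necessarily logarithmic) representation of $\rG'$ together with a vector-valued function on $\h$ that obeys the required functional equation and has at worst finite-order poles at the cusps. First I would reduce to the weight $0$ case: once we have a weakly holomorphic logarithmic vvaf $\X_0(\tau)$ of weight $0$ associated to some $\rho$, the function $\X_0(\tau)\Delta_{\rG'}^{k/w_{\rG'}}(\tau)$ is a weakly holomorphic logarithmic vvaf of weight $k$ associated to $\rho\otimes\nu_{\rG'}^{k}$, using the scalar cusp form $\Delta_{\rG'}$ from Section~\ref{sec:prop} (holomorphic on $\h^*_{\rG}$, nonvanishing except at $\infty$). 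So it suffices to build one in weight $0$.

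For the weight $0$ construction I would use the uniformizing coordinate. Since $\rG\backslash\h$ is a Riemann surface with a hyperbolic structure, there is a holomorphic (locally injective) map realizing $\h$ as the universal cover; equivalently, pick any nonconstant meromorphic function $F$ on $\h$ which is $\rG'$-invariant of weight $0$ (a Hauptmodul-type function when the genus is $0$, or any quotient of two scalar automorphic forms of equal weight in general — such exist because $R_{\rG'}$ is a nontrivial ring). Then consider the holomorphic function $\tau\mapsto (F(\tau),1)\in\C^2$. It does not transform by a representation of $\rG'$ (it is literally invariant), so instead I would mimic the example with $\mathbb{I}_0$ worked out just before the lemma: take a primitive of an automorphic form, i.e. a function $G$ on $\h$ with $G(\g\tau)=G(\tau)+c(\g)$ for a nontrivial additive cocycle $c:\rG'\to\C$, obtained by integrating a weight $2$ holomorphic form (the periods give the cocycle). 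Then $\tau\mapsto \bigl(G(\tau),1\bigr)^t$ satisfies $\Y(\g\tau)=\begin{psmallmatrix}1 & c(\g)\\ 0 & 1\end{psmallmatrix}\Y(\tau)$, so it is a vvaf of weight $0$ for the representation $\rho(\g)=\begin{psmallmatrix}1 & c(\g)\\0&1\end{psmallmatrix}$, which is logarithmic precisely because $\rho(t_\cu)$ is a nontrivial unipotent Jordan block for at least one cusp (one chooses the cocycle so that some cusp period is nonzero — always possible since the space of weight $2$ cusp forms plus Eisenstein series surjects onto the relevant cohomology). Finally I would check the cuspidal condition: $G$ has at worst logarithmic-times-$\tq_\cu$-power growth at each cusp, so in the normalization of~\eqref{eqn:logarithmic}–\eqref{eqn:finite} the $\tq_\cu$-expansions $h_{\lambda,j,\cu}$ start from a finite $n$, giving weak holomorphy.

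The main obstacle is the uniform existence of the additive cocycle with a nonvanishing cusp period for an \emph{arbitrary} Fuchsian group of the first kind — for the modular group this is transparent (integrate $E_2$-type data, or use the explicit $\mathbb{I}_0$ example), but in general one must invoke either (i) the nonvanishing of the space of weight $2$ Eisenstein series attached to the cusps, whose constant terms realize any prescribed residue data subject to the single linear relation $\sum_\cu (\text{residue at }\cu)=0$, or (ii) the fact that $H^1(\rG,\C)\neq 0$ whenever $\rG$ has a cusp, together with Eichler–Shimura to realize a class by an automorphic integral. Either route is standard, but it is the one place where "any Fuchsian group of the first kind" (as opposed to a lattice with a convenient explicit model) requires a genuine input rather than a formula; I would phrase it via the Eisenstein series of weight $2$ so that the logarithmic term at a chosen cusp is manifestly present. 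Everything else — the weight reduction, the transformation law, and the finite-order pole check at the cusps — is routine bookkeeping with the conventions of Section~\ref{sec:vvaf}.
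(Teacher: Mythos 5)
There is a genuine gap at the heart of your weight-$0$ construction. Your representation $\rho(\g)=\begin{psmallmatrix}1 & c(\g)\\ 0 & 1\end{psmallmatrix}$ requires $c$ to be an additive homomorphism $\rG'\to(\C,+)$ (the period map of a holomorphic weight-$2$ form is indeed one), and for $\rho$ to be \emph{logarithmic} you need $c(t_\cu)\neq 0$ for some cusp. But such a $c$ simply does not exist when $\rG$ has exactly one cusp: in the standard presentation of a Fuchsian group of the first kind the lone parabolic generator is a product of commutators and torsion (elliptic) generators, so every homomorphism to $\C$ kills it; equivalently, the constant terms of weight-$2$ Eisenstein series satisfy the relation $\sum_\cu(\text{residue at }\cu)=0$ that you yourself quote, which forces the single residue to vanish, and cusp-form periods of parabolic elements are zero anyway. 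Your fallback (ii) is also false as stated: $H^1(\mathrm{PSL}_2(\Z),\C)=\mathrm{Hom}(\mathrm{PSL}_2(\Z),\C)=0$ even though the modular group has a cusp, and even in positive genus the surviving classes vanish on parabolics in the one-cusp case. So for $\rG=\mathrm{PSL}_2(\Z)$ (and any one-cusp group) your $\rho$ has $\rho(t_\cu)=I$, which is diagonalizable, and the lemma is not proved in precisely the most basic case.

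The fix is the one the paper uses, and it is simpler than either of your routes: take the defining representation $\mathbb{I}:\rG'\to\mathrm{GL}_2(\C)$, $\gamma\mapsto\gamma$, and $\X_0(\tau)=(\tau,1)^t$. The identity $\begin{psmallmatrix}a&b\\ c&d\end{psmallmatrix}\begin{psmallmatrix}\tau\\ 1\end{psmallmatrix}=(c\tau+d)\begin{psmallmatrix}\gamma\tau\\ 1\end{psmallmatrix}$ shows $\X_0$ is a weight $-1$ vvaf for $\mathbb{I}$, and $\mathbb{I}(t_\cu)$ is automatically a non-diagonalizable parabolic matrix with unitary eigenvalues $\pm1$, so no cohomological or Eisenstein-series input is needed and no case distinction on the number of cusps arises; the cusp expansions have only a $\log\tq_\cu$ term with finite-order principal part. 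Your weight-reduction step (multiplying by $\Delta_{\rG'}^{(k+1)/w_{\rG'}}$, here with exponent shifted to account for weight $-1$) then matches the paper's and finishes the proof. Note that the example with $\mathbb{I}_0$ that you cite as motivation already \emph{is} this construction; specializing it to your unipotent model loses exactly the feature (non-diagonalizability at parabolics) that makes it work for every $\rG$.
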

\begin{proof} Consider the representation $\mathbb{I}:\rG\to \text{GL}_2(\C),$ given by $\gamma \mapsto \gamma.$ Now consider the vector-valued holomorphic function $\X_0:\mathbb{H}\to \C^2$ given by $\tau \mapsto (\tau,1).$ Note that
$$\X_0\left(\frac{a\tau+b}{c\tau+d}\right)=(c\tau+d)^{-1}\X_0(\tau),~\forall~\tmt{a}{b}{c}{d}\in \rG,~\tau \in \mathbb{H}.$$
Of course, we can not conclude about the weight from here if $c=0$ and $d=1$ for every element in $\rG.$ However, that is not the case because $\rG$ is a Fuchsian group of the first kind. Therefore, $\X(\tau)$ is a weakly holomorphic logarithmic vvaf of weight $-1$ associated with $\mathbb{I}.$ Then $\X_k(\tau):=\X_0\Delta_{\rG}^{\frac{k+1}{w_{\rG}}}(\tau)$ is a weakly holomorphic logarithmic vvaf of weight $k$ associated with the representation $\rho:=\mathbb{I} \otimes \nu_{\rG}^{k+1}.$
\end{proof} 
Now one may naturally ask if given any representation $\rho:\rG \to \text{GL}_m(\mathbb{C})$ can we find a non-trivial weakly-holomorphic vvaf associated to $\rho$ whose components are linearly independent? Of course, the interesting question is when the components are linearly independent; otherwise, one could take a zero function. The first author confirmed this for admissible representations having a finite image, and in~\cite[Chapter 5]{BajpaiThesis}, the same is confirmed for any admissible representations with rational exponents\footnote{In other words, all the eigenvalues of the image of any parabolic element is rational.}, associated with any genus zero Fucshian groups of the first kind. Furthermore, in~\cite[Chapter 8]{BajpaiThesis}, explicit generators are produced for the rank $2$ case when the Fuchsian group is a modular triangle group. An extension to rank $3$ for the modular group is done by Candelori et. al. in~\cite{CHMY}.

The main goal of this section is to study this for logarithmic vector-valued automorphic forms. More precisely, we consider the following question.
\begin{question}\label{qn:existence}
Let $\rG$ be any Fuchsian group of the first kind. Given a logarithmic representation $\rho$ of $\rG$, can we find a non-trivial logarithmic vvaf associated with $\rho$ whose components are linearly independent? 
\end{question}
For the logarithmic representations associated with the modular group, the existence could be deduced from~\cite[Theorem 3.1]{Gannon1}. Gannon remarked that a similar result could be deduced for any genus $0$ Fuchsian group of the first kind. 

Before proving our main result of this section, let us quickly recall the result of~\cite{Bajpai2019}. As mentioned, the author assumed that the image of $\rho$ has a finite image and all the eigenvalues of $\rho(\gamma)$ are unitary for any parabolic element $\gamma \in \rG.$ In this case we must have that, $\mu(\cdot)$ of all such eigenvalues are rational.

Now for the logarithmic case, we do not have the luxury to assume that the representation has a finite image. If $\rho$ is not admissible, then there exists some parabolic element $\gamma\in \rG$ such that $\rho(\gamma)$ has at least one non-trivial Jordan block, say $J_{\lambda}.$ Suppose this block is of size $m(\lambda)\times m(\lambda).$ Since it is non-trivial, we may assume that $1<m(\lambda)\leq m.$ If $\rho$ has finite image, then the set $\{J_{\lambda}^n\}_{n\in \mathbb{N}}$ must also be finite. Note that,
$$J_{\lambda}^{n}=\lambda^{n}(I_{m(\lambda)}+N)^{n}=\lambda^n\sum_{0\leq i\leq m(\lambda)}\binom{n}{i}N^i,$$
where $N$ is the corresponding nilpotent matrix. In particular, for any large $n$, we note that there is an entry in $J_{\lambda}^n$ of modulus $\gg \binom{n}{m(\lambda)}.$ Therefore, we can not assume that the representation has a finite image for the logarithmic case. Here we would like to give a partial answer to Question~\ref{qn:existence} in the following form.

\begin{prop}\label{prop:existence} Let $\rH, \rG$ and $\rho$ be as in Theorem~\ref{thm:main}. Suppose that $\psi$ be a representation associated to $\rG$ such that $\widetilde{\rho}=\psi$. Then there exists a weakly holomorphic logarithmic vvaf $\widetilde{\X}(\tau)$ associated to $(\psi,\rG)$ with linearly independent components if and only if there exists a weakly holomorphic logarithmic vvaf $\X(\tau)$ associated to $(\rho,\rH)$ with linearly independent components.
\end{prop}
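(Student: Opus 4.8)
The plan is to prove both directions using the explicit description of the lift $\X \mapsto \widetilde{\X}^{(\cu)}$ together with the isomorphism already established in Proposition~\ref{prop:isomorphism}. Since $\widetilde{\rho}=\psi$, a vvaf for $(\psi,\rG')$ is the same thing as a vvaf for $(\widetilde{\rho},\rG')$, so the two spaces $\m^{!}_{k}(\rho)$ and $\m^{!}_{k}(\widetilde{\rho})$ are $R_{\rG'}$-isomorphic via $\X \mapsto \widetilde{\X}$, whose inverse sends $\Y=(\Y_1,\dots,\Y_d)$ to $\Y_1$ (after reducing to weight $0$ by multiplying by an appropriate power of $\Delta_{\rG'}$ as in the proof of Proposition~\ref{prop:isomorphism}). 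So the content of the proposition is entirely about tracking \emph{linear independence of components} across this isomorphism, and I would reduce to weight $0$ at the outset since multiplication by the nowhere-vanishing-on-$\h$ scalar form $\Delta^{\pm k/w}$ preserves linear independence of the component functions.

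\textbf{Forward direction.} Suppose $\X(\tau)$ is a weakly holomorphic logarithmic vvaf for $(\rho,\rH')$ with linearly independent components $\X_1,\dots,\X_m$. I would show $\widetilde{\X}^{(\cu)}(\tau) = \big(\X(g_{i,j}^{-1}\tau)\big)^{\mathfrak t}_{i,j}$ has linearly independent components. Its components are the functions $\tau \mapsto \X_\ell(g_{i,j}^{-1}\tau)$ for $1\le \ell\le m$, $1\le i\le n_\cu$, $0\le j<h_i$. Suppose a linear relation $\sum_{i,j,\ell} c_{i,j,\ell}\, \X_\ell(g_{i,j}^{-1}\tau)=0$ holds identically on $\h$. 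The key point is that the functions $\{\X_\ell(g_{i,j}^{-1}\tau)\}$ coming from \emph{distinct} cosets $g_{i,j}$ are ``independent'' because the $g_{i,j}^{-1}$ move $\h$ around in genuinely different ways; more precisely I would exploit the behaviour near the cusp $\cu$ (or near the various cusps $\cu_i$ of $\rH$): the block form of $\widetilde{\rho}(t_\cu)$ recalled in Section~\ref{sec:desc} and the Fourier/logarithmic expansion~\eqref{eqn:logarithmic} show that the $\tq_\cu$-expansion of the lifted form organizes these $d$ blocks into distinct eigenvalue-sectors (Corollary~\ref{cor:eigenvalues}), so terms from different $i$ cannot cancel, reducing to a relation among $\{\X_\ell(t_\cu^{-j}\tau)\}$ for a fixed $i$, which after the substitution $\tau \mapsto t_\cu^{j}\tau$ and using the functional equation reduces to a relation among the $\X_\ell$ themselves, forcing all $c_{i,j,\ell}=0$.

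\textbf{Reverse direction.} Suppose $\widetilde{\X}(\tau)$ is a weakly holomorphic logarithmic vvaf for $(\psi,\rG')=(\widetilde{\rho},\rG')$ with linearly independent components. As in Proposition~\ref{prop:isomorphism}, after reducing to weight $0$ we may write $\widetilde{\X}=(\Y_1,\dots,\Y_d)$ with $\Y_i(\tau)=\Y_1(\gamma_i^{-1}\tau)$ and $\Y_1 \in \m^{!}_0(\rho)$; set $\X:=\Y_1$, so that $\widetilde{\X}=\widetilde{\X}^{(\cu)}$. If the components of $\X=\Y_1$, say $\X_1,\dots,\X_m$, satisfied a nontrivial relation $\sum_\ell c_\ell \X_\ell = 0$, then replacing $\tau$ by $g_{i,j}^{-1}\tau$ gives $\sum_\ell c_\ell \X_\ell(g_{i,j}^{-1}\tau)=0$ for every $(i,j)$, which is a nontrivial linear relation among the components of $\widetilde{\X}$ — contradiction. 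Hence the components of $\X$ are linearly independent, completing the equivalence.

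\textbf{Main obstacle.} The only genuinely nontrivial step is the forward direction: showing that a linear relation among the $d\cdot m$ functions $\X_\ell(g_{i,j}^{-1}\tau)$ forces all coefficients to vanish. The reverse direction is essentially formal because pulling back a single relation along the $g_{i,j}$ produces a relation of the same shape, but the forward direction requires genuinely separating the contributions of different cosets. I expect the cleanest route is analytic: pass to the logarithmic $\tq_\cu$-expansion of $\widetilde{\X}^{(\cu)}$ at the cusp $\cu$, where by~\eqref{eqn:logarithmic} and Corollary~\ref{cor:eigenvalues} the $d$ blocks sit in distinct generalized eigenspaces of $\widetilde{\rho}(t_\cu)$ (indexed by the $h_i$-th roots $\zeta\lambda_{(i,k)}^{1/h_i}$), so a relation among components must respect this block decomposition; within a single block one is reduced to a relation among $\X_\ell(t_\cu^{-j}\tau)$, $0\le j<h_i$, and a short argument comparing growth of the $\tq$-exponents (or using that the $m(\lambda)$-dimensional logarithmic pieces $(\log\tq)^{j'}h_{\lambda,j',\cu}$ are linearly independent) collapses this to linear independence of the $\X_\ell$, which is the hypothesis. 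An alternative, purely group-theoretic route is to observe that if $\sum c_{i,j,\ell}\X_\ell(g_{i,j}^{-1}\tau)\equiv 0$ then applying the functional equation for various $\gamma\in\rG'$ permutes the cosets and produces enough relations to force $c_{i,j,\ell}=0$; I would develop whichever is shorter, but I anticipate the analytic argument via the cusp expansion to be the one that goes through with least fuss.
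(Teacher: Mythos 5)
Your reverse direction and the initial reduction to weight $0$ agree with the paper, and that half is fine: it follows from part (1) of Proposition~\ref{prop:isomorphism}, since the components of $\X=\Y_1$ occur (up to composition with the $\gamma_i^{-1}$ and a nonvanishing scalar factor) among the components of $\widetilde{\X}$. The genuine gap is in your forward direction: the statement you set out to prove there, namely that the lift $\widetilde{\X}^{(\cu)}(\tau)=\left(\X(g_{i,j}^{-1}\tau)\right)^{\mathfrak{t}}_{i,j}$ of \emph{any} $\X$ with linearly independent components again has linearly independent components, is false. If $\rho$ is the restriction to $\rH'$ of a representation $\sigma$ of $\rG'$ and $\X$ is a weight-$0$ form for $(\rG',\sigma)$ viewed as a form for $(\rH',\rho)$ --- for instance $\rG'=\mathrm{SL}_2(\Z)$, $\X(\tau)=(\tau,1)^t\Delta(\tau)^{1/12}$, $\rho=(\mathbb{I}_0\otimes\nu_{\mathrm{SL}_2(\Z)})|_{\rH'}$ for any proper finite-index $\rH'$ --- then each block of the lift equals $\sigma(g_{i,j})^{-1}\X(\tau)$, so all $dm$ component functions lie in the $m$-dimensional span of the components of $\X$ and are linearly dependent as soon as $d\geq 2$. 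For the same reason your proposed separation mechanism cannot be pushed through: the eigenvalue sectors $\zeta\lambda_{(i,k)}^{1/h_i}$ of Corollary~\ref{cor:eigenvalues} need not be distinct for different $i$ or $k$, and within one block a relation among the functions $\X_\ell(t_\cu^{-j}\tau)$, $0\leq j<h_i$, is not a relation among the $\X_\ell$ themselves, because $t_\cu^{-j}\notin\rH'$ in general.

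The proposition is only an existence statement, and the paper's proof exploits exactly this freedom: it concedes that the set $\{\X(g_{i,j}^{-1}\tau)\}$ may be linearly dependent and in that case replaces $\X$ by $F\X$, where $F(\tau)=\prod_{i,j}\left(f(\tau)-f(g_{i,j}^{-1}\tau_0)\right)^{n_{i,j}}$ is a scalar modular function for $\rH$ built from a nonconstant modular function $f$ and a generic point $\tau_0$ (so the representation $\rho$ and the weight are unchanged). The exponents $n_{i,j}$ are chosen so that the vanishing orders of the blocks $F\X(g_{i,j}^{-1}\tau)$ at $\tau_0$ are pairwise distinct, which forces the blocks of the lifted form $\widetilde{F\X}$ to be linearly independent; $F\X$ is still a weakly holomorphic logarithmic vvaf for $(\rho,\rH')$, and its lift is the required form for $(\psi,\rG')$. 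So your argument needs this (or an equivalent) modification of $\X$ \emph{before} lifting; as written, the forward implication does not follow, and no refinement of the cusp-expansion argument alone can rescue it, since the assertion it aims at is false.
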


\begin{proof} It is enough to work with only weight $0$ forms because multiplication by a scalar-valued non-zero function does not change the linear independence. Suppose that there exists $(\rho,\rH)$ with an associated weakly holomorphic logarithmic vvaf $\X(\tau).$ It follows from Theorem~\ref{thm:main} that $\widetilde{\X}(\tau)$ is a weakly holomorphic logarithmic vvaf for $\text{Ind}_{\rH}^{\rG}(\rho)=\psi.$ We are now done for the if part, provided that the set $\left\{\X(g_{i,j}^{-1}\tau)| 1\leq i\leq n_{\infty}, 0\leq j<h_i\right\}$ is a linearly independent set. If not, we fix a point $\tau_0\in \mathbb{H}$ such that $g_{i,j}^{-1}\tau_0$ are pairwise distinct, and denote $m_{i,j}=\text{ord}_{\tau=\tau_0}(\X(g_{i,j}^{-1}(\tau))$. Let $f$ be any non-constant modular function for $\rH$. For a set of positive integers $\{n_{i,j}\}$ to be specified later, let us consider the function,
$$F(\tau)=\prod_{i=1}^{n_{\infty}}\prod_{j=0}^{h_i -1}\left(f(\tau)-f(g_{i,j}^{-1}\tau_0)\right)^{n_{i,j}}.$$
Note that $F(\tau)$ is a modular function for $\rH$, with 
$$\text{ord}_{\tau=\tau_0}F(g_{i,j}^{-1}\tau)=\sum n_{i,j}\text{ord}_{\tau=\tau_0}f(g_{i,j}^{-1}\tau).$$ Now we choose $\{n_{i,j}\}$ in such a way, so that $m_{i,j}+\text{ord}_{\tau=\tau_0}F(g_{i,j}^{-1}\tau)$ are pairwise distinct, and then $\text{ord}_{\tau=\tau_0}F\X(g_{i,j}^{-1}\tau)$ are pairwise distinct. Consequently, the set $\left\{F\X(g_{i,j}^{-1}\tau) | 1\leq i\leq n_{\infty}, 0\leq j<h_i\right\}$ is linearly independent. In particular, now $F\X(\tau)$ is a weakly holomorphic logarithmic vvaf for $\rH$ associated to $\rho$ and $\widetilde{\X}(\tau):=\widetilde{F\X}(\tau)$ is a weakly holomorphic logarithmic vvaf associated to $\widetilde{\rho}:=\psi$, with linearly independent components, as desired.

On the other hand, let $\widetilde{\X}(\tau)$ be a weakly holomorphic logarithmic vvaf for $(\psi,\rG)$ with linearly independent coefficients. In this case, the proof immediately follows from part $(i)$ of Proposition~\ref{prop:isomorphism}.
\end{proof}
We record the following consequence by combining Lemma~\ref{lem:exist} and Proposition~\ref{prop:existence}.
\begin{cor}\label{cor:existence} 
Assume that $\rho$ lies in the following set of representations,
$$\left\{\widetilde{\mathbb{I}}\otimes v_{\rG}^k |~k\in \Z,~\rH \subseteq \rG,~ [\rG/\rH]<\infty \right\}.$$
Then we may find a logarithmic vvaf associated with $\rho$, whose components are linearly independent.
\end{cor}

\section{Fourier coefficients of the lifted forms}\label{sec:computation}
In the previous section, we discussed the cuspidal properties of the lifted automorphic forms. In this section, we shall study their Fourier coefficients and prove Theorem~\ref{thm:relation}. 

\subsection{On the growth of the Fourier coefficients}In~\cite{BBF}, the authors studied the growth of any holomorphic vvaf associated with any Fuchsian group of the first kind $\bar{\rH}$. More precisely, they showed that there exists a constant $\alpha$ (depending on the associated representation) such that $\norm{\X[n]}\ll_{\bar{\rH},\rho} n^{k+2\alpha},$ where $\X(\tau)$ is a holomorphic vvaf of weight $k\in 2\mathbb{Z}$ associated a representation $\rho$ of $\bar{\rH}.$ Moreover, the constant $\alpha$ depends only on $\bar{\rH}$ and the exponent $k+2\alpha$ can be divided by $2$ for cuspforms. In this section, we shall not only work with $\bar{\rH}$ but also with $\rH$, and study the change of this exponent under lifting.

In general, we show that the constant $\alpha$ is multiplied by, at most, the index of $\rH$ in $\rG.$ In particular, the exponent does not change when $\alpha=0.$ In~\cite{BBF}, the authors remarked that $\alpha$ can be taken to be $0$ when $\rho$ is a unitary representation. To prove the main result of this section, let us start with the following.
\begin{lem}\label{lem:growth} Let $\rho$ be a representation of $\rH$ such that $\norm{\rho(h)}\ll_{\rH} \norm{h}^{\alpha},~\forall h\in\rH.$ Then the induced representation $\widetilde{\rho}$ associated to a finite extension $\rG$ of $\rH$ has the following growth.
$$\norm{\widetilde{\rho}(\gamma)}\ll_{\rho,\rG} \norm{\gamma}^{\alpha},~\forall \gamma\in \rG.$$
\end{lem}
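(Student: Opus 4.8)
The plan is to reduce the norm estimate for $\widetilde{\rho}(\gamma)$ to the hypothesis on $\rho$ by using the block structure of the induced representation. Recall that for a fixed set of coset representatives $\gamma_1,\dots,\gamma_d$ of $\rH'$ in $\rG'$, the induced representation acts on $\bigoplus_{i=1}^d \C^m$ by the standard formula: for $\gamma \in \rG'$ and each $i$ there is a unique permutation index $\sigma_\gamma(i)$ with $\gamma_i^{-1}\gamma\gamma_{\sigma_\gamma(i)} \in \rH'$, and the $(i,\sigma_\gamma(i))$-block of $\widetilde{\rho}(\gamma)$ is $\rho(\gamma_i^{-1}\gamma\gamma_{\sigma_\gamma(i)})$, with all other blocks zero. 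Hence $\widetilde{\rho}(\gamma)$ is a ``block monomial'' matrix, so its operator norm is bounded (up to a constant depending only on $d$, hence only on $\rG'$) by $\max_i \norm{\rho(\gamma_i^{-1}\gamma\gamma_{\sigma_\gamma(i)})}$.

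First I would invoke the hypothesis $\norm{\rho(h)}\ll_{\rH'}\norm{h}^\alpha$ with $h = \gamma_i^{-1}\gamma\gamma_{\sigma_\gamma(i)} \in \rH'$, giving $\norm{\rho(\gamma_i^{-1}\gamma\gamma_{\sigma_\gamma(i)})}\ll_{\rH'}\norm{\gamma_i^{-1}\gamma\gamma_{\sigma_\gamma(i)}}^\alpha$. Then, since matrix multiplication is submultiplicative for the operator norm, $\norm{\gamma_i^{-1}\gamma\gamma_{\sigma_\gamma(i)}}\leq \norm{\gamma_i^{-1}}\,\norm{\gamma}\,\norm{\gamma_{\sigma_\gamma(i)}}\leq C\norm{\gamma}$, where $C := \max_{1\le i\le d}\norm{\gamma_i^{-1}}\cdot\max_{1\le i\le d}\norm{\gamma_i}$ is a constant depending only on the (finitely many) fixed representatives, hence only on $\rG'$. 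Combining, each block has norm $\ll_{\rG'}\norm{\gamma}^\alpha$, and taking the maximum over the $d$ blocks (and absorbing the factor coming from passing from a block-monomial matrix to its norm into the implied constant) yields $\norm{\widetilde{\rho}(\gamma)}\ll_{\rG'}\norm{\gamma}^\alpha$ for all $\gamma\in\rG'$.

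One technical point worth spelling out: one must check that $\alpha\geq 0$ (or handle $\alpha<0$ separately), since the inequality $\norm{\gamma_i^{-1}\gamma\gamma_{\sigma_\gamma(i)}}^\alpha \le (C\norm{\gamma})^\alpha$ requires monotonicity of $t\mapsto t^\alpha$ on the relevant range; but in the intended application $\alpha$ is a nonnegative growth exponent, so this is not a genuine obstruction. Similarly one should note $\norm{\gamma}\geq 1$ for $\gamma\in\mathrm{SL}_2(\R)$ (the operator norm of a determinant-one matrix is at least $1$), which makes the constant absorption clean. The main obstacle here is essentially bookkeeping rather than conceptual: being careful that every constant that appears genuinely depends only on $\rG'$ (equivalently on the finite data $\rH'\subseteq\rG'$ and the fixed choice of representatives) and not on $\gamma$, and that the block-monomial-to-norm comparison does not secretly introduce a $\gamma$-dependence. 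Once the block description of $\widetilde\rho$ from Section~\ref{sec:desc} is in hand, the estimate follows in a few lines.
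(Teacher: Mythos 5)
Your proposal is correct and follows essentially the same route as the paper: bound $\norm{\widetilde{\rho}(\gamma)}$ by the maximum of the block norms $\norm{\rho(\gamma_i^{-1}\gamma\gamma_j)}$ coming from the block-monomial structure of the induced representation, then combine the hypothesis on $\rho$ with submultiplicativity of the norm to absorb the fixed coset representatives into the implied constant. Your extra remarks (that $\alpha\geq 0$ and $\norm{\gamma}\geq 1$ are needed for the constant absorption) are careful points the paper leaves implicit, but they do not change the argument.
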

\begin{proof} It follows from the definition of induced representation that,
$$\norm{\widetilde{\rho}(\gamma)}\leq \max_{\substack{1\leq i,j\leq d\\ \gamma_i\gamma\gamma_j^{-1}\in \rH}}\norm{\rho(\gamma_i\gamma\gamma_j^{-1})}.$$
On the other hand, it follows from the assumption on $\rho,$ and the semi multiplicative property of $\norm{\cdot}$ that $\norm{\rho(\gamma_i\gamma\gamma_j^{-1})}\ll_{\rho,\rG}\norm{\gamma}^{\alpha}.$ %This completes the proof. 
\end{proof}
Let us now recall from Lemma~\ref{lem:cuspidal} that if $\X(\tau)$ is a holomorphic vvaf of weight $0,$ then the lifted form $\widetilde{\X}(\tau)$ is a holomorphic vvaf as well. As a consequence of Lemma~\ref{lem:growth}, we deduce the following.
\begin{cor} Let $\rH,\rG$ and $\rho$ be as in Theorem~\ref{thm:main}, and $\X(\tau)$ be an admissible holomorphic vvaf associated to $(\rH,\rho)$ of weight $0.$ Then there exists a constant $\alpha$ depending only on $\rH$ and $\rho$ such that the Fourier coefficients of the lifted holomorphic vvaf $\widetilde{\X}(\tau)$ have the following growth
$$\norm{\widetilde{\X}[n]}\ll n^{\alpha}.$$
In particular, the growth of the Fourier coefficients of the lifted vector-valued automorphic forms does not depend on $\rG.$ However, if $\X(\tau)$ is a holomorphic logarithmic vvaf, then the exponent increases by at most $\mathrm{rank}(\widetilde{\rho}):=\mathrm{rank}(\rho)|\rG/\rH|.$ 
 
\end{cor}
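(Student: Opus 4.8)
The plan is to reduce the growth statement for $\widetilde{\X}[n]$ to the growth result for holomorphic (logarithmic) vvaf established in~\cite{BBF} together with Lemma~\ref{lem:growth}. First I would recall from~\cite{BBF} that for a holomorphic vvaf $\X(\tau)$ of weight $0$ associated to a representation $\sigma$ of a Fuchsian group, satisfying $\norm{\sigma(h)}\ll\norm{h}^{\alpha}$, the Fourier coefficients obey $\norm{\X[n]}\ll n^{\alpha}$ (the exponent being $k+2\alpha$ with $k=0$ in our normalization, and $\alpha$ read off from the polynomial growth of the representation). For the \emph{admissible} case this applies verbatim with $\sigma=\rho$ on $\rH'$: by Lemma~\ref{lem:cuspidal}(2) the lift $\widetilde{\X}(\tau)$ is again a holomorphic vvaf of weight $0$, now associated to $\widetilde{\rho}$, and by Lemma~\ref{lem:growth} we have $\norm{\widetilde{\rho}(\gamma)}\ll\norm{\gamma}^{\alpha}$ with the \emph{same} $\alpha$. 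Feeding this back into the growth bound from~\cite{BBF} gives $\norm{\widetilde{\X}[n]}\ll_{\rH',\rho} n^{\alpha}$, and since $\alpha$ depends only on $\rH'$ and $\rho$ (not on $\rG$), the first assertion follows. The remark that $\alpha=0$ for unitary $\rho$ then shows the exponent is genuinely unchanged in the unitary case.

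Next I would treat the logarithmic case. Here the subtlety is that the growth estimate of~\cite{BBF}, as well as the polynomial-growth hypothesis of Lemma~\ref{lem:growth}, must account for the non-semisimple behaviour at the cusps: if $\rho(t_{\cu})$ (equivalently $\widetilde{\rho}(t_{\cu})$, by Proposition~\ref{prop:logarithmic}) has a Jordan block of size $m(\lambda)>1$, then the $\tq_{\cu}$-expansion coefficients $\X^{\cu}[\lambda,j,n]$ pick up extra polynomial factors in $n$ coming from the $(\log\tq_{\cu})^{j}$ terms in~\eqref{eqn:logarithmic} and~\eqref{eqn:finite}. The largest such $j$ is bounded by $\max_{\cu}m(\lambda_{\cu})-1$, and by Corollary~\ref{cor:eigenvalues}(b) the Jordan block sizes are preserved under induction; however the \emph{number} of blocks, hence the rank of the representation, is multiplied by $|\rG/\rH|$. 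Tracking the contribution of these logarithmic factors to the Fourier coefficients of $\widetilde{\X}(\tau)$ — each block of size $m$ contributing a factor $n^{m-1}$, and the total rank being $\mathrm{rank}(\widetilde{\rho})=\mathrm{rank}(\rho)|\rG/\rH|$ — gives the stated increase of the exponent by at most $\mathrm{rank}(\widetilde{\rho})$.

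Concretely, I would carry this out as follows: (i) state the growth bound from~\cite{BBF} in the form it is proved there, isolating where the rank and the Jordan structure enter the exponent; (ii) apply Lemma~\ref{lem:cuspidal}(2) to get that $\widetilde{\X}(\tau)$ is holomorphic of weight $0$ for $\widetilde{\rho}$; (iii) apply Lemma~\ref{lem:growth} to transport the polynomial-growth constant $\alpha$ unchanged from $\rho$ to $\widetilde{\rho}$; (iv) in the admissible case conclude directly; (v) in the logarithmic case combine the preserved Jordan-block sizes from Corollary~\ref{cor:eigenvalues}(b) with the inflated rank to bound the extra polynomial factor. The main obstacle I anticipate is step (i)/(v): making the dependence of the exponent on the logarithmic (Jordan) structure fully explicit, since~\cite{BBF} is quoted for weight $k\in 2\Z$ and for holomorphic forms, and one must check that the constant $\alpha$ there can indeed be chosen to absorb the unitary case with $\alpha=0$ while the logarithmic correction is cleanly bounded by $\mathrm{rank}(\widetilde{\rho})$ rather than by something larger. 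The rest is bookkeeping with the semi-multiplicativity of $\norm{\cdot}$ and the induced-representation formula already used in Lemma~\ref{lem:growth}.
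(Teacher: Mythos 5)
Your proposal is correct and follows essentially the same route as the paper: reduce to the growth estimate of~\cite{BBF}, use Lemma~\ref{lem:cuspidal}(2) for holomorphy of the weight-$0$ lift and Lemma~\ref{lem:growth} to carry the polynomial-growth constant $\alpha$ from $\rho$ to $\widetilde{\rho}$ unchanged (so the bound is independent of $\rG$), and in the logarithmic case bound the extra contribution of the $(\log\tq)^{j}$ terms by $\mathrm{rank}(\widetilde{\rho})=\mathrm{rank}(\rho)\,|\rG/\rH|$. The only detail the paper adds is the one-line remark that at weight $0$ one has $\rho(I)=\rho(-I)$, so $\rho$ descends to $\rH$ and the even-weight hypothesis of~\cite{BBF} applies — exactly the point you flag as needing a check.
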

\begin{proof}When $\rho$ is admissible, it follows from~\cite[Lemma 4.1]{BBF} that $$\norm{\widetilde{\rho}(h)}\ll_{\rH} \norm{h}^{\alpha},~\forall h\in \rH,$$ 
for some constant $\alpha$ depending on $\rH$ and $\rho$. In particular, Lemma~\ref{lem:growth} implies that
$$\norm{\widetilde{\rho}(\gamma)}\ll_{\rG} \norm{\gamma}^{\alpha}.$$
Since $\X(\tau)$ has even integer weight, it is evident that $\rho(I)=\rho(-I).$ Therefore $\rho$ can be thought of as a representation of $\rH$. Proof for the admissible case then follows from~\cite[Theorem 1.1]{BBF}.

For the logarithmic case, one can essentially follow the argument on page 21 of~\cite{BBF}. The increase in the exponent is coming because of the extra logarithmic terms in the Fourier expansion, and they come with power at most $\text{rank}(\widetilde{\rho}),$ which is precisely $\text{rank}(\rho)|\rG/\rH|=md.$
\end{proof}
\subsection{Computations of the Fourier coefficients} In this section, we shall study the relationship between the Fourier coefficients of a weakly holomorphic vvaf and its lift. Consequently, we shall draw some conclusions on the unbounded denominator problem.

Let us start by recalling the coset representatives $\left\{g_{i,j}| 1\leq i\leq n_{\infty}, 0\leq j< h_i\right\}$ of $\rH$ in $\rG$ from Section~\ref{sec:lift}. Let $\X(\tau)$ be a weakly holomorphic vvaf of weight $0$, then by definition 
$$\widetilde{\X}(\tau)=\left(\X(g_{i,j}^{-1}\tau)\right)^t_{\substack{1\leq i\leq n_{\infty}\\ 0\leq j< h_i}}.$$ To get the Fourier coefficients of $\widetilde{\X}(\tau)$, say, at the cusp $\infty,$ we need to study the Fourier expansion of each component $\X(g_{i,j}^{-1}\tau)$ at $\infty.$ 

Let $\rH, \rG$ be as in Theorem~\ref{thm:main} and $\rho$ be an admissible representation of $\rH$. Let $\infty$ be a cusp of $\rG$ and $\left\{\cu_{i} | 1\leq i\leq n_{\infty}\right\}$ be the set of all inequivalent cusps of $\rH$ lying under $\infty.$ For each such $i,$ we write $\rho(t_i)=P_iJ_iP_i^{-1},$ and $\widetilde{\rho}(t_{\infty})=P_{\infty}J_{\infty}P_{\infty}^{-1},$ where $J_i$ and $J_{\infty}$ are the corresponding diagonal matrices. It follows from the proof of Theorem~4.2 in~\cite{Bajpai2019} that we can write $P_{\infty}$ in a block diagonal form $\text{diag}(P'_1,P'_2,\cdots P'_{n_{\infty}})$, where each block $P'_i$ is of the size $mh_i\times mh_i,$ which can be described by the proof of Theorem 4.2 in~\cite{Bajpai2019}. In particular, when $P_i$ are all trivial, then all the entries of any such block $P'_i$ either $0$ or $\lambda^{1-j/h_i}\zeta^j,$ for some $\zeta \in R_{h_i}.$

When $\rho$ is a logarithmic representation, $J_i$ are in a Jordan normal form. Even in this case, we can write each $P_i$ in a block diagonal form as before. This follows from the proof of Lemma~\ref{cor:eigenvalues}.

\subsection{Proof of Theorem~\ref{thm:relation}}
\subsection*{Admissible case.}
In this case, $\widetilde{\X}[n]$ is denoted to be the $n^{\text{th}}$ Fourier coefficient of $\widetilde{\X}(\tau)$ at $\infty,$ which is a $m d\times 1$ vector. For any cusp $\cu_i$ of $\rH$ lying under $\infty,$ write $\rho(t_{i})=P_i e^{2\pi\Ii\Lambda_{\cu_i}}P^{-1}_i$ and $$\widetilde{\rho}(t_{\infty})=P_{\infty}\mathrm{diag}\left(e^{2\pi\Ii \frac{\Lambda_{\cu_i}+j}{h_i}}\right)_{\substack{1\leq i\leq n_{\infty},\\0\leq j<h_i}}P^{-1}_{\infty}.$$ It follows from Corollary 4.7 of~\cite{Bajpai2019}, equation~(\ref{eqn:admf}), and the discussion in the previous paragraph that, for each $1\leq i\leq n_{\infty},$ we can write 
$$\left(\X(g^{-1}_{i,j}\tau)\right)^t_{0\leq j<h_i}={P'}_{i} \mathrm{diag}\left(\widetilde{q}^{\frac{\Lambda_{\cu_i}+j}{h_i}}\right)_{0\leq j<h_i} {P'}_{i}^{-1}\left(\sum_{n=-M_{i,j'}}^{\infty}\widetilde{\X}_{i,j'}[n]\widetilde{q}^{n}\right)^t_{0\leq j'<h_i},$$
where each $\widetilde{\X}_{i,j'}[n]$ is a $m\times 1$ vector. 

Denoting $\left({\widetilde{\X}'}_{i,j}[n]\right)^t_{0\leq j<h_i}={P'}_i^{-1}\left(\widetilde{\X}_{i,j'}[n]\right)^t_{0\leq j'<h_i}$, and using the identity that $h_{\cu_i}=h_ih_{\infty},$ we have
\begin{align}\label{algn10}
{P'}_{i}^{-1}\left(\X(g^{-1}_{i,j}\tau)\right)^t_{0\leq j<h_i}&=\left(\widetilde{q}^{\frac{\Lambda_{\cu_i}+j}{h_i}}\sum_{n=-M_{i,j}}^{\infty} {\widetilde{\X}'}_{i,j}[n]\widetilde{q}^{n}\right)^t_{0\leq j<h_i}\\\nonumber 
&= \left(e^{\frac{2\pi \Ii \tau}{h_{\infty}}\frac{\Lambda_{\cu_i}+j}{h_i}}\sum_{n=-M_{i,j}}^{\infty} {\widetilde{\X}'}_{i,j}[n]e^{\frac{2n\pi \Ii \tau}{h_{\infty}}}\right)^t_{0\leq j<h_i}\\\nonumber 
&=\left(e^{\frac{2\pi \Ii \tau}{h_{\cu_i}}\Lambda_{\cu_i}}\sum_{n=-M_{i,j}}^{\infty} {\widetilde{\X}'}_{i,j}[n]e^{\frac{2(nh_i+j)\pi \Ii \tau}{h_{\cu_i}}}\right)^t_{0\leq j<h_i}.
\end{align}
On the other hand, $\X|_kg_{i,j}^{-1}(\tau):=\X|_0g_{i,j}^{-1}(\tau)=\X(g_{i,j}^{-1}\tau)$ is a weakly holomorphic vvaf associated to $\rH_{i,j}:=g_{i,j}\rH g_{i,j}^{-1},$ with respect to the representation $\rho_{i,j}:=g_{i,j}\gamma g_{i,j}^{-1} \mapsto \gamma.$ Moreover, $\infty$ is a cusp of $\rH_{i,j}$ because $g_{i,j}^{-1}(\infty)=\cu_i,$ and $\cu_i$ is a cusp of $\rH.$ In other words, $g_{i,j}t_{i} g_{i,j}^{-1}$ is the generator of the stabilizer subgroup of $\infty$ in $\rH_{i,j},$ with $\rho_{i,j}(g_{i,j}t_{i} g_{i,j}^{-1})=\rho(t_{i})$, for any pair $(i,j)$. Therefore, we can write
\begin{align}\label{algn11}
P_i^{-1}\X(g_{i,j}^{-1}\tau)&=e^{\frac{2\pi \Ii \tau}{h_{\cu_i}}\Lambda_{\cu_i}}\sum_{n=-M'_{i,j}}^{\infty} \X'^{(i,j)}[n]e^{\frac{2n\pi \Ii \tau}{h_{\cu_i}}},~\forall j\leq h_i,
\end{align}
where ${\X'}^{(i,j)}[n]=P^{-1}_i\X^{(i,j)}[n]$. 

Combining (\ref{algn10}), and (\ref{algn11}), we have the following equality for each $1\leq i\leq n_{\infty}$
\begin{equation*}\label{eqn:semifinal}
\left(\sum_{n=-{M'}_{i,j}}^{\infty} {\X'}^{(i,j)}[n]e^{\frac{2\pi \Ii \tau}{h_{\cu_i}}(n)}\right)^t_{0\leq j<h_i}=Q_i\left(\sum_{n=-M_{i,j'}}^{\infty} {\widetilde{\X}'}_{i,j'}[n]e^{\frac{2\pi \Ii \tau}{h_{\cu_i}}(nh_i+j')}\right)^t_{0\leq j'<h_i},
\end{equation*}
where $Q_i$ is the $mh_i\times mh_i$ matrix, given by $Q_i=P_i^{-1}P'_i$. 

Now, recalling that 
$${\X'}^{(i,j)}[n]=P^{-1}_i\X^{(i,j)}[n],~\left({\widetilde{\X}'}_{i,j}[n]\right)^t_{0\leq j<h_i}={P'}_i^{-1}\left(\widetilde{\X}_{i,j'}[n]\right)^t_{0\leq j'<h_i},$$
it finally follows from $(\ref{eqn:semifinal})$ that, for any $0\leq j<h_i$,
\begin{equation}\label{eqn:admfinal}
   \widetilde{\X}_{i,j}[n]=
    \X^{(i,j)}\big[nh_i+j\big].
\end{equation}

\subsection*{Logarithmic case.} Suppose that $\X(\tau)$ is a weakly holomorphic logarithmic vvaf, i.e., let $\rho$ be a logarithmic representation of $\rH.$ Let $\{\lambda_{(i,k)}|1\leq k\leq e_i\}$ be the set of eigenvalues of $\rho(t_i)$ for each $1\leq i\leq n_{\infty},$ where $e_i$ is number of distinct eigenvalues of $\rho(t_i)$. From Lemma~\ref{cor:eigenvalues}, we have an explicit description of the eigenvalues (with multiplicity) of $\rho(t_{\infty}),$ in terms of the $\lambda_{(i,k)}$. 

The argument to prove Theorem~\ref{thm:relation} for this logarithmic case is essentially the same as in the admissible case; therefore, we shall omit some details. Following~(\ref{eqn:logarithmic}), and recalling the identity from Lemma~\ref{cor:eigenvalues} that $m(\lambda)=m(\zeta \lambda^{1/h_i}),$ we can write the following for each $1\leq i\leq n_{\infty}$

\resizebox{\textwidth}{!}{
\parbox{\textwidth}{
\begin{align}\label{algn:4}
\left(\X(g_{i,j}^{-1}\tau)\right)^t_{0\leq j<h_i}
&= P'_i \sum_{\zeta \in R_{h_i}}\sum_{\lambda\in R(t_i)}\sum_{j'=0}^{m(\lambda)-1}\left(\log \widetilde{q}\right)^{j'}h_i^{j'}\widetilde{q}^{\mu(\lambda)}\sum_{n=-M_{\lambda,j'}}^\infty\widetilde{\X}'[\zeta \lambda^{1/h_i},j',n] \widetilde{q}^{nh_i+j(\zeta)},
\end{align}
}}
where we denote ${\widetilde{\X}'}[\zeta\lambda^{1/h_i},j',n]={P'}_{i}^{-1} \widetilde{\X}[\zeta\lambda^{1/h_i},j',n]$, $j(\zeta)$ to be the numerator of $\mu(\zeta)$, and $\widetilde{q}=e^{\frac{2\pi \Ii \tau}{h_{\infty}}}$, as in~$(\ref{algn10})$.

On the other hand, realizing $\X(g_{{i,j}}^{-1}\tau)$ as a weakly holomorphic vvaf associated to $H_{i,j}$, we can write the following for each $0\leq j< h_i$
\begin{align}\label{algn:5}
\X(g_{i,j}^{-1}\tau)&=P_i\sum_{\lambda\in R(t_{i})}\sum_{j'=0}^{m(\lambda)-1}(\log \widetilde{q})^{j'}\widetilde{q}^{\mu(\lambda)}\sum_{n=-M'_{\lambda,j'}}^\infty\X'^{(i,j)}[\lambda,j',n] \widetilde{q}^n,
\end{align}
where we denote $\X'^{(i,j)}[\lambda,j',n]=P_i^{-1}\X^{(i,j)}[\lambda,j',n]$, and $\widetilde{q}=e^{\frac{2\pi \Ii \tau}{h_{\cu_i}}}$, as in~$(\ref{algn11})$.

Comparing (\ref{algn:4}) and (\ref{algn:5}) in a similar manner as in the admissible case, we have $h_iM_{\lambda,j'}=M'_{\lambda,j'}$. Furthermore, for any positive integer $n$ and any $j'$ such that $0\leq j'<m(\lambda)$, we have
\begin{equation}\label{eqn:logfinal}
   \widetilde{\X}[\zeta \lambda^{1/h_i},j',n]= 
    h_i^{-j'}\X^{(i,j)}\big[\lambda,j',nh_i+j\big],
    \end{equation}
where $j=j(\zeta)$.

Therefore, the proof of both admissible and logarithmic cases follows by setting all the $r_i,m_i$, and $n_i$ appropriately as per the requirements in~(\ref{eqn:admfinal}) and~(\ref{eqn:logfinal}).
\qed
\newpage
\begin{remark}\rm $\;$
\begin{enumerate}
\item[(a)] One can also recover Lemma~\ref{lem:cuspidal} from Theorem~\ref{thm:relation}.
\item[(b)] More generally let $k$ be an arbitrary integer and $\X(\tau)$ be a weakly holomorphic vvaf of weight $k.$ Recall that, $\widetilde{\X}(\tau):=\widetilde{\X_0}^{(\infty)}(\tau)\Delta_{\rG}^{k/w_{\rG}}(\tau),$ where $\X_0(\tau)=\Delta_{\rG}^{-k/w_{\rG}}(\tau)\X(\tau).$ In the proof of Theorem~\ref{thm:relation}, we basically saw how to compute the Fourier coefficients of $\X_0(\tau).$ By the same argument, the Fourier coefficients of $\widetilde{\X}(\tau)$ can be written in terms of the coefficients of $\{\X_0g_{i,j}^{-1}(\tau)\}_{\substack{1\leq i\leq n_{n_{\infty}}\\ 0\leq j<h_i}}$, and $\Delta_{\rG}(\tau).$
\item[(c)] In certain cases, it turns out that $A_{i}=A_{\cu_i}.$ In those cases, we look at the expansion at the cusp $\cu_i$ while considering the vvaf $\X|_kA_{\cu_i}.$ For example, if $\rG=\Gamma(1)$ and $\rH=\Gamma_0(2),$ then one of the cusp is $0$ and one of the $A_i$ is precisely $S.$ However, in general, it may not always be possible that any $A_{\cu_i}$ is in $\rG.$ In those cases, they differ by an element of the form $\tmt{a}{\alpha}{0}{1/a},$ as already explained in the proof of Lemma~\ref{lem:gtu}.
\end{enumerate}
\end{remark}

\section*{Acknowledgements}
The authors would like to thank for their hospitality the Georg-August Universit\"at G\"ottingen, Max Planck Institut f\"ur Mathematik, Bonn, and IISER TVM, India, where much of the work on this article was accomplished. SB is supported by ERC Consolidator grant 648329 (codename GRANT, with H. Helfgott as PI), and the fellowship of IISER TVM.

%----------------------------------------BIBLIOGRAPHY --------------------------------------

\nocite{}
 \bibliographystyle{abbrv}
 \bibliography{arxiv}
\end{document}